\newcommand{\HE}{Name of Handling Editor}
\newcommand{\DoS}{May/15/2023}
\newcommand{\DoA}{Month/Day/Year}
\newcommand{\CA}{Daniel \v{S}ev\v{c}ovi\v{c}}
\newcommand{\Names}{So\v{n}a Pavl\'{\i}kov\'a, and Daniel \v{S}ev\v{c}ovi\v{c}}
\newcommand{\Title}{Qualitative, statistical and extreme properties of spectral indices of signable pseudo-invertible graphs}
\newtheorem{remark}[theorem]{Remark}
\begin{document}

\bibliographystyle{plain}

\setcounter{page}{1}

\thispagestyle{empty}

 \title{\Title\thanks{Received by the editors on \DoS.
 Accepted for publication on \DoA. 
 Handling Editor: \HE. Corresponding Author: \CA}}

\author{So\v{n}a Pavl\'{\i}kov\'a
\thanks{Inst. of Information Engineering, Automation, and Math, FCFT, Slovak Technical University, 812 37 Bratislava, Slovakia.  Supported by the grant APVV-22-0005.
}
\and
Daniel \v{S}ev\v{c}ovi\v{c}
\thanks{Department of Applied Mathematics and Statistics, FMFI, Comenius University, 842 48 Bratislava, Slovakia (sevcovic@fmph.uniba.sk). Supported by the grant APVV-20-0311. }
}

\markboth{\Names}{\Title}

\maketitle

\begin{abstract}
In this paper, we investigate the Moore-Penrose inversion  of a simple connected graph. We analyze qualitative, statistical, and extreme properties of spectral indices of signable pseudo-invertible graphs. We introduce and analyze a wide class of signable pseudo-invertible simple connected graphs. It is a generalization of the classical concept of positively integrally invertible graphs due to Godsil. We present several constructions of signable pseudo-invertible graphs. We also discuss statistical properties of various spectral indices of the class of signable pseudo-invertible graphs. 
\end{abstract}

\begin{keywords}
pseudo-invertible graph; signability of  Moore-Penrose   inverse matrix; signable pseudo-invertible graph; spectral index of a graph
\end{keywords}
\begin{AMS}
05C50 05B20 05C22 15A09 15A18 15B36
\end{AMS}




\section{Introduction}

In this paper, we investigate a class of signable pseudo-invertible graphs. A graph is called signable pseudo-invertible if the   Moore-Penrose inverse  matrix of its adjacency matrix can be signed to a matrix containing elements of the same sign. Recall that inversion of an adjacency matrix does not need to define a graph again because it may contain both positive and negative elements (see Harari and Minc \cite{Har}). To overcome this difficulty, Godsil \cite{Godsil1985} introduced the concept of positively integrally invertible graphs by defining a graph to be invertible if the inverse of its non-singular adjacency matrix is integral and diagonally similar (cf. Zaslavski \cite{Zas}) to a non-negative integral matrix representing the adjacency matrix of the inverse graph in which positive labels determine edge multiplicities. In the series of papers \cite{Pavlikova2016,Pavlikova2022-LAA},  Pavl\'\i kov\'a and \v{S}ev\v{c}ovi\v{c} extended this notion to a broader class of graphs by introducing the concept of negative invertibility of a graph. Both positively and negatively invertible graphs have the appealing property that inverting an inverse graph gives back the original graph. Related results, including a unifying approach to inverting graphs, were proposed in a recent survey paper by McLeman and McNicholas \cite{McMc} focusing on the inverses of bipartite graphs and the diagonal similarity to non-negative matrices. For other results on graph inverses based on Godsil's ideas, we refer to Akbari and Kirkland \cite{KirklandAkb2007}, Kirkland and Tifenbach \cite{KirklandTif2009}, and Bapat and Ghorbani \cite{Bapat} (see also \cite{Bapat2010}). Ye \emph{et al.} \cite{Ye} investigated graph inverses in the context of median eigenvalues. In \cite{Tifenbach},  Tifenbach investigated a class of graphs whose adjacency matrices are non-singular with integral inverses and strongly self-dual graphs. Pavl\'{\i}kov\'a \cite{Pavlikova2015} developed constructive methods to generate invertible graphs by the method of edge overlapping.  Our main purpose is to extend and investigate the concept of positive and negative invertibility of a graph due to Godsil \cite{Godsil1985}, Pavl\'\i kov\'a and \v{S}ev\v{c}ovi\v{c} \cite {Pavlikova2016} to the case when its adjacency matrix is not invertible but its   Moore-Penrose inverse  matrix is still signable to a matrix containing elements of the same sign. Such a signed   Moore-Penrose inverse  matrix can represent a weighted graph with non-negative weights. 

In applications including chemistry, biology, or statistics, various spectral indices of graphs representing the structure of organic molecules, or transition diagrams for finite Markov chains play an important role (cf. Cvetkovi\'c \cite{Cvetkovic2004, CvDS}, Brouwer and Haemers \cite{Brouwer2012} and references therein). All of them are related to various graph energies and spectral indices, which are characterized by means of eigenvalues of the adjacency matrix of a graph. Recall that the sum of absolute values of eigenvalues is called the matching energy index $\Lambda^{pow}$ (cf. Chen and Jinfeng \cite{Lin2016}), the maximum of absolute values of the least positive and largest negative eigenvalue is known as the HOMO-LUMO index $\Lambda^{ind}$ (see Mohar  \cite{Mohar2015}, Jakli\'c  \emph{et al.} \cite{Jaklic2012}, Fowler \emph{et al.} \cite{Fowler2010}), their difference is the HOMO-LUMO separation gap $\Lambda^{gap}$ (cf.  Gutman and Rouvray \cite{Gutman1979}). Regarding Aihara \cite{Aihara1999JCP}, a larger HOMO-LUMO gap implies higher kinetic stability and lower chemical reactivity of a molecule. According to Bacalis and Zdetsis \cite{Bacalis2009} the spectral separation gap $\Lambda^{gap}$ generally decreases with the number of vertices of the structural graph. The spectral indices $\Lambda^{gap}$ and $\Lambda^{ind}$ are closely related to the Moore-Penrose   (group)   inverse matrix $A^\dagger$ of the adjacency matrix $A$ of a given graph $G^A$. Following Pavl\'\i kov\'a and \v{S}ev\v{c}ovi\v{c} \cite{Pavlikova2016, Pavlikova2022-LAA} we have
\[
\Lambda^{gap}(A)=  \lambda_{max}(A^\dagger)^{-1} -  \lambda_{min}(A^\dagger)^{-1}, \quad
\Lambda^{ind}(A)=  \max( \lambda_{max}(A^\dagger)^{-1},  -\lambda_{min}(A^\dagger)^{-1} ),  
\]
where $\lambda_{max}(A^\dagger),  \lambda_{min}(A^\dagger)$ are the maximal and minimal eigenvalues of $A^\dagger$. As a consequence, properties of maximal and minimal eigenvalues of the pseudo-inverse graph can be used to analyze the spectral indices $\Lambda^{gap}$ and $\Lambda^{ind}$ of the graphs.

Recently, McDonald, Raju, and Sivakumar \cite{McDonald} studied the Moore-Penrose   (group)   inverses of adjacency matrices associated with certain graph classes. They derived formulae for the Moore-Penrose inverses of matrices that are associated with a class of digraphs obtained from stars. This new class contains both bipartite and non-bipartite graphs. A representation of the   Moore-Penrose inverse   matrix corresponding to the Dutch windmill graph has been derived by McDonald \emph{et al.} \cite{McDonald}. In \cite{Pavlikova2019-Australasian} Pavl\'{\i}kov\'{a} and \v{S}ir\'a\v{n} constructed a pseudo-inverse of a weighted tree in terms of maximal matchings and alternating paths. 
 
The maximum and minimal eigenvalues of the pseudo-inverse graph can be used to determine the spectral indices $\Lambda^{gap}$ and $\Lambda^{ind}$ (see also Pavl\'{\i}kov\'{a} and \v{S}ev\v{c}ovi\v{c} \cite{Pavlikova2015, Pavlikova2016, Pavlikova2022-LAA}).

The objective of this paper is two-fold. In Section 2 we introduce and investigate the wide class of signable pseudo-invertible simple connected graphs. Section 3 is devoted to computational and statistical results on this class of graphs. More precisely, in Section 2 we introduce the notion of positive/negative/positive and negative pseudo-invertible graphs, which is based on the signability of the Moore-Penrose inverse matrix $A^\dagger$ of the adjacency matrix $A$ corresponding to a graph $G^A$. The signed Moore-Penrose inverse matrix $A^\dagger$ defines a weighted graph $(G^A)^\dagger$. We investigate special classes of cycle and path graphs. We completely characterize which cycle and path graphs admit signable pseudo-inversion. We show that the complete graph $K_m$  is signable pseudo-invertible only for order $m=2$. Furthermore, we show that the complete multipartitioned graphs $K_{m_1,\dots,m_k}$ are signable pseudo-invertible only for $k=2$. Furthermore, we introduce and analyze a novel concept of $G^\mathscr{A}$-complete multipartitioned graph which is constructed from the original labeled graph $G^\mathscr{A}$ by replacing a vertex $i$ with the set of $m_i$ vertices and connecting them with the set of $m_j$ vertices iff $\mathscr{A}_{ij}=1$. It can be viewed as a natural generalization of a complete multipartitioned graph.  In Section 3 we discuss statistical properties of the class of signable pseudo-invertible graphs. We focus on maximal and minimal eigenvalues, as well as spectral indices $\Lambda^{gap}, \Lambda^{ind}$, and $\Lambda^{pow}$. In general, we present results for the order $m\le 10$ although some properties of extreme eigenvalues and indices hold for a general order $m$. Our descriptive statistical results are based on the complete list of simple connected graphs provided by McKay \cite{McKay} (see also \cite{Bender1990}) for orders $m\le 10$ in combination of a survey of signable pseudo-invertible graphs due to Pavl\'\i kov\'a and \v{S}ev\v{c}ovi\v{c} \cite{Pavlikova2022-WWW}.

\section{Signable pseudo-invertible graphs}

Let $G = (V, E)$ be an undirected connected graph with the set of $m$ vertices $V$ and the set of edges $E$. The graph $G$ may contain loops and its edges can be weighted. 
  
By $A_G$ we denote its symmetric adjacency matrix that contains   non-negative   elements. 
On the contrary, if $A$ is a nonnegative symmetric matrix, then $G^A$ denotes the graph with the adjacency matrix $A$. 

The spectrum $\sigma(G^A)$ of a graph $G^A$ consists of eigenvalues $\lambda_{min}\equiv\lambda_m \le \dots \le \lambda_1\equiv \lambda_{max}$ of its symmetric adjacency matrix $A_G$, that is $\sigma(G^A) = \sigma(A_G )$, (cf. Cvetkovi\'c \emph{et al.} \cite{Cvetkovic1978, CvDS}). Notice that the diagonal of an adjacency matrix $A$ representing a simple graph is zero, and so its trace is zero. As a consequence, we have $\lambda_{min}(A)<0<\lambda_{max}(A)$ where $\lambda_{min}(A), \lambda_{max}(A)$ are minimal and maximal eigenvalues of $A$, respectively. 

If $K$ is an $n\times m$ real matrix, then its Moore-Penrose inverse matrix is an $m\times n$ matrix $K^\dagger$ that is uniquely determined by the following identities (see Ben-Israel and Greville \cite{Ben}):
\begin{equation}
(K K^{\dagger})^T = K K^{\dagger}, \quad (K^{\dagger} K)^T = K^{\dagger} K, \quad K K^{\dagger} K =K, \quad K^{\dagger} K K^{\dagger}= K^{\dagger} .
\label{MPmn}
\end{equation}
In the case when $A$ is an $m\times m$ real symmetric matrix, Moore-Penrose can be constructed explicitly in the following way: Let $\mathscr{P}$ be an orthogonal matrix such that $\mathscr{P} A\mathscr{P}^{T}=\Lambda=\operatorname{diag}\left(\lambda_{1}, \ldots, \lambda_{m}\right)$, where $\lambda_{i}, i=1,\dots,m$, are real eigenvalues of the symmetric square matrix $A$. 
 
Given a number $\lambda\in\mathbb{R}$, its Moore-Penrose inverse is: $\lambda^\dagger = 0$ when $\lambda = 0$ and $\lambda^\dagger = 1/\lambda$, otherwise. 
 
Then the Moore-Penrose inverse matrix $A^\dagger$ that satisfies the axioms (\ref{MPmn}) is given uniquely by $A^{\dagger} = \mathscr{P}^{T}\Lambda^{\dagger}\mathscr{P}$ where $  \Lambda^{\dagger} =diag\left(\lambda_1^\dagger, \ldots, \lambda_{m}^\dagger\right)$. 
 
The Moore-Penrose inverse $A^\dagger$ of a real symmetric square matrix $A$ is also known as the group inverse of $A$. Note that the concept of a Moore-Penrose matrix inversion is more general than a matrix inversion. If a square symmetric matrix $A$ is invertible, then it is also   Moore-Penrose   invertible and $A^\dagger=A^{-1}$.

\begin{definition}\label{def:pseudoinv-matrix}
Let $A$ be a real symmetric matrix. Its Moore-Penrose inverse matrix $A^{\dagger}$ is called positively (negatively) signable if there exists a diagonal $\pm1$ signature matrix $D$ such that the matrix $D A^{\dagger} D$ contains non-negative (nonpositive) elements only. 
\end{definition}

\begin{definition}\label{def:pseudoinv}
An undirected weighted graph $G^A$ is called positively (negatively) pseudo-invertible if the Moore-Penrose inverse matrix $A^{\dagger}$ is a positively (negatively) signable matrix.  If $D$ is the corresponding signature matrix, the   undirected   pseudo-inverse graph $(G^A)^{\dagger}$ is defined by the weighted non-negative   symmetric   adjacency matrix $D A^{\dagger} D$, if $A^\dagger$ is positively signable ($ -DA^{\dagger} D$, if $A^{\dagger}$ is negatively signable).    A graph $G^A$ is called signably pseudo-invertible if it is positively or negatively pseudo-invertible. 
\end{definition}

In general, the symmetric Moore-Penrose inverse matrix $A^\dagger$ may contain non-integral elements (including its diagonal elements). The pseudo-inverse graph $(G^A)^\dagger$ is a uniquely defined undirected nonsimple graph containing weighted loops even if the graph $G^A$ is a simple graph.
 
Note that the pseudo-inverse graph $(G^A)^\dagger$ is defined by the non-negative weighted adjacency matrix $D A^\dagger D$ ($-D A^\dagger D$). The matrix $(D A^\dagger D)^\dagger$ is signable by the same signature matrix and $D (D A^\dagger D)^\dagger D = D D (A^\dagger)^\dagger D D = A$. One can proceed similarly if $G^A$ is a negatively pseudo-invertible graph. As a consequence, we obtain
\[
(G^\dagger)^\dagger = G.
\] 
It means that inverting an inverse graph gives the original graph. Furthermore, it is easy to verify by a simple contradiction argument that the weighted pseudo-inverse graph is connected, provided that the original graph is connected. 
 
Examples of simple connected graphs on five vertices and their weighted pseudo-invertible graphs with loops are shown in Fig.~\ref{fig-graf-m5-examples}.

\begin{figure}[ht]
    \centering
    
    \includegraphics[width=.25\textwidth]{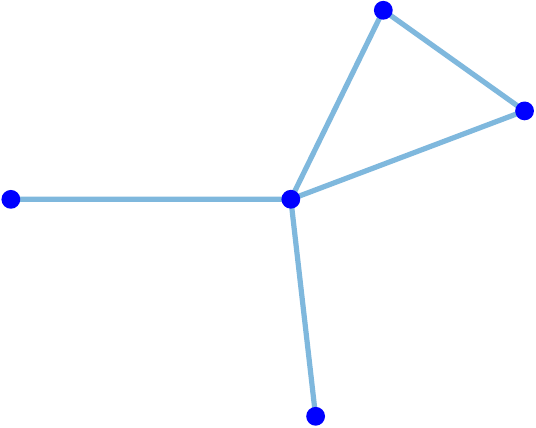} 
    \quad 
    \includegraphics[width=.25\textwidth]{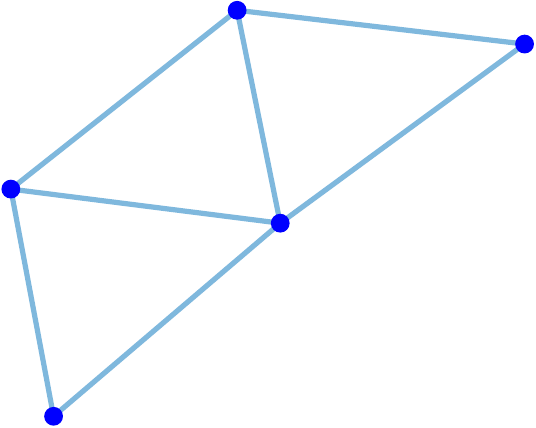}
    \quad 
    \includegraphics[width=.25\textwidth]{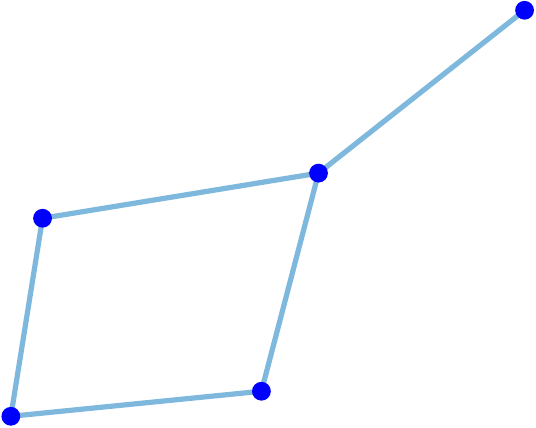}
\vskip 2mm

    \includegraphics[width=.25\textwidth]{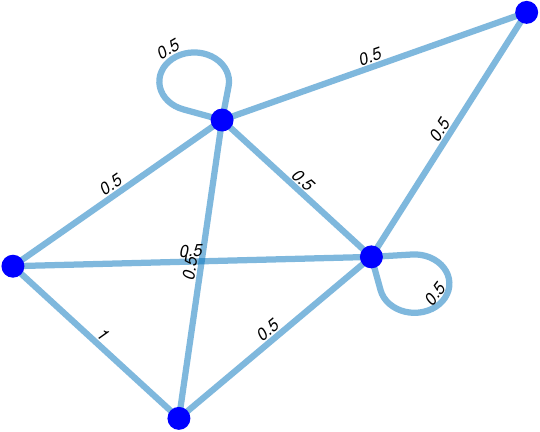} 
    \quad 
    \includegraphics[width=.25\textwidth]{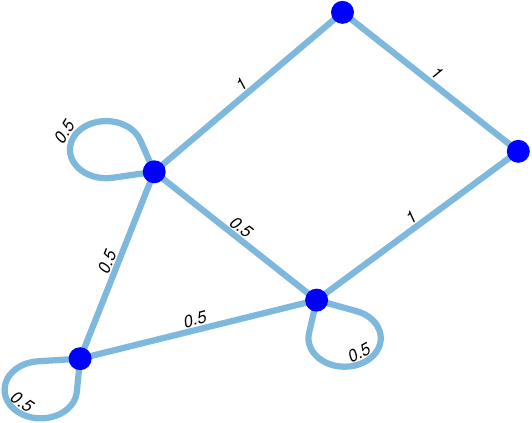}
    \quad 
    \includegraphics[width=.25\textwidth]{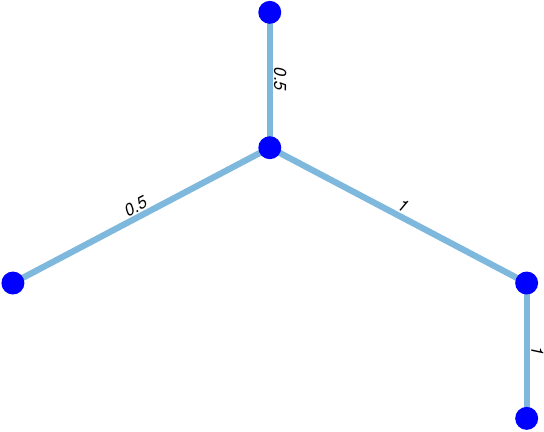}

    \caption{\small Top row: examples of a positively (left), negatively (middle), positively and negatively (right) pseudo-invertible graphs on $m=5$ vertices. Bottom row: corresponding weighted pseudo-inverse graphs.}
    \label{fig-graf-m5-examples}
    
\end{figure}

The explicit form of the Moore-Penrose (group) inverse of tridiagonal circulant matrices has been derived, e.g., by Encinas, Carmona \emph{et al.} \cite{carmona, encinas} and references therein. To our knowledge, the signability of   the Moore-Penrose   inverse matrix of a circulant or tridiagonal matrix has not been investigated so far.  
In the next proposition, we analyze positive/negative pseudo-invertibility of   cycle   $C_m$ and path $P_m$ graphs on $m$ vertices. 

\begin{proposition}
Let $C_m$ be a   cycle   graph of order $m$, $m\ge 3$. The graph $C_m$ is neither positively nor negatively pseudonvertible for $m\not=4$. The   bipartite   graph $C_4$ is positively and negatively pseudo-invertible.
Path graphs $P_m$ are integrally positively and negatively invertible bipartite graphs for any even order $m$. The path graph $P_3$ is positively and negatively pseudo-invertible. The path graph $P_m$ is neither positively nor negatively pseudonvertible for any odd degree $m\not=3$. 

\end{proposition}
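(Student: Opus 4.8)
The plan is to translate signability into a balance condition on a signed graph and then extract the few entries of $A^{\dagger}$ that actually decide it. For a real symmetric $M$, the existence of a signature $D$ with $DMD\ge 0$ is equivalent to two conditions: all diagonal entries of $M$ are nonnegative, and the signed graph carried by the support of the off-diagonal entries of $M$, with edge $ij$ signed by $\operatorname{sign}(M_{ij})$, is balanced, i.e. every cycle carries an even number of negative edges (diagonal similarity to a nonnegative matrix, cf. Zaslavski \cite{Zas}); negative signability is the same statement applied to $-M$. I would first record that $C_m$ for even $m$ and every $P_m$ are bipartite, so $A^{\dagger}$ keeps the bipartite block form, its diagonal vanishes, and all cycles of its signed graph are even. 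Hence for these graphs the diagonal condition is vacuous and positive and negative signability coincide, so only balance has to be tested; for odd $C_m$ the nonzero diagonal will carry the argument.

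Since $A$ is a symmetric circulant, so is $A^{\dagger}$; write $c_d$ for its entry at circular distance $d$. For odd $m$ the matrix $A$ is nonsingular, and solving $Ax=e_0$ through the recurrence $x_{i+2}=-x_i$ away from the source yields $c_0=(-1)^{(m-1)/2}/2$, $c_1=1/2$ and $c_2=-c_0$. Positive signability would demand both $c_0\ge 0$ and, since $c_1>0$, that the triangle $\{i,i+1,i+2\}$ be balanced, i.e. $\operatorname{sign}(c_2)=+1$, i.e. $c_0<0$; these are contradictory, and the analogous contradiction for $-A^{-1}$ excludes negative signability, so odd $C_m$ is neither. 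For even $m\ge 6$ the entries satisfy $c_1>0>c_3$ — from the same recurrence when $m\equiv 2\pmod 4$, and from the explicit group inverse of tridiagonal circulants (Carmona, Encinas \emph{et al.} \cite{carmona,encinas}) when $m\equiv 0\pmod 4$ — so the $4$-cycle $\{i,i+1,i+2,i+3\}$, built from three distance-one edges and one distance-three edge, has exactly one negative edge and is unbalanced. The borderline $m=4$ is exempt because distance three collapses onto distance one; here $A^{\dagger}=\tfrac14 A\ge 0$, so $C_4$ is positively and hence negatively signable.

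For even $m$ the path $P_m$ has a unique perfect matching and $\det A=\pm 1$, so $A^{-1}$ is integral; by Godsil's theorem \cite{Godsil1985} such a bipartite graph is positively integrally invertible, and bipartiteness upgrades this to negative invertibility as well, the inverse being $\{0,\pm1\}$-valued. For $P_3$ a direct computation gives $A^{\dagger}=\tfrac12 A\ge 0$, so it is both positively and negatively pseudo-invertible. For odd $m\ge 5$ I would exhibit, on the initial sub-path $1,2,3,4$ together with the chord $\{1,4\}$, a $4$-cycle whose $A^{\dagger}$-entries carry signs $(+,+,+,-)$; this is again unbalanced, so $P_m$ is neither positively nor negatively signable, the shortness of $P_3$ being exactly what prevents such a chorded $4$-cycle from arising.

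The conceptual skeleton — the balance criterion, the diagonal/triangle contradiction for odd cycles, and the unbalanced chorded $4$-cycle for even cycles and odd paths — is uniform and short. The genuine difficulty lies in the sign bookkeeping in the singular regime, namely confirming $c_1>0>c_3$ for $C_m$ with $m\equiv 0\pmod 4$ and the signs $(+,+,+,-)$ along $1,2,3,4$ for odd $P_m$, where the elementary recurrence $Ax=e_j$ is no longer available. Here I would invoke the closed forms for the Moore--Penrose (group) inverse of tridiagonal and circulant matrices \cite{carmona,encinas} together with the matching and alternating-path representation of the pseudo-inverse of a weighted tree due to Pavl\'{\i}kov\'{a} and \v{S}ir\'a\v{n} \cite{Pavlikova2019-Australasian}. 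The single point demanding real care is to verify that these decisive entries never vanish accidentally and retain the stated signs for every admissible $m$, since one isolated cancellation could in principle restore balance and break the argument.
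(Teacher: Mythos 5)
Your argument is correct, and it reaches the same decisive quantities as the paper -- the sign of the diagonal entry $a^\dagger_0$ for odd cycles, the signs $a^\dagger_1>0>a^\dagger_3$ for cycles of length $\ne 4$, and the signs $(+,+,+,-)$ of $(A^\dagger)_{12},(A^\dagger)_{23},(A^\dagger)_{34},(A^\dagger)_{14}$ for odd paths $m\ge 5$ -- but it packages the obstruction differently. The paper propagates the signature directly: positive signability forces $d_{i+1}d_i=\operatorname{sign}(a^\dagger_1)$ around the cycle, hence $D=I$ (or the alternating $D$ in the negative case), and then a single offending entry gives the contradiction. You instead invoke the Harary--Zaslavsky balance criterion, reducing everything to the parity of negative edges on a triangle or a chorded $4$-cycle; this is equivalent, but it buys you two things the paper does not state explicitly: (a) for bipartite-supported $A^\dagger$ with zero diagonal, positive and negative signability coincide, so even cycles and all paths need only one check; (b) the diagonal/triangle dichotomy disposes of both residues $m\equiv 1,3 \pmod 4$ of odd cycles at once, where the paper treats $C_3$ separately. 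Your use of Godsil's theorem for even paths is also a legitimate shortcut replacing the explicit Da Fonseca--Petronilho inverse (and, combined with bipartiteness, it cleanly yields both signabilities, whereas the paper's displayed signature matrix for the positive case contains an evident typo). The one place where your write-up falls short of a proof is the one you yourself flag: the signs $c_1>0>c_3$ for $C_m$ with $4\mid m$, $m\ge 8$, and the signs along $1,2,3,4$ for odd $P_m$ are asserted with a citation rather than derived. The paper does carry out these computations (a circulant ansatz giving $a^\dagger_p=\frac{1}{m}\left(\frac{m}{2}-|p|\right)(-1)^{(|p|-1)/2}$ for odd $p$ when $4\mid m$, and an explicit formula for $(A^\dagger)_{ij}$ on odd paths), and they confirm that none of your decisive entries vanishes and that all carry the signs you claim; so the outline is sound, but to be self-contained it needs those two computations spelled out.
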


\begin{proof}
There are several papers dealing with the form of the   Moore-Penrose   inverse matrix to the adjacency matrix of a circular graph $C_m$ (see Sivakumar and Nandi \cite{Sivakumar2022}, and references therein). Here, we present the approach taking into account the circulant form of the adjacency matrix. Notice that the adjacency matrix $A$ has the form of a circular Toeplitz matrix, $A_{ij}=a_{i-j}, i,j=1,\dots,m$, where $a_p, p\in\mathbb{Z}$, is an $m$-periodic sequence such that $a_{m-p}=a_{-p}=a_p, a_1=a_{-1}=a_{m-1}=1$, and $a_p=0$ for $p\not=\pm1, m-1, -m+1$. The    Moore-Penrose   inverse matrix $A^\dagger$ can be searched again in the form of a circular Toeplitz matrix, $A^\dagger_{ij}=a^\dagger_{i-j}, i,j=1,\dots,m$. Taking into account the form of the sequence $a_p, p\in\mathbb{Z}$, it is straightforward to verify that $A^\dagger$ satisfies the Moore-Penrose axioms provided that
$\sum_{p=1}^m (a_{s-p-1} + a_{s-p+1} ) a^\dagger_p  = a_s, \quad\text{for each}\  s=1,\dots, m$. 
Calculating the solution of this equation and taking into account the fact $a^\dagger_p=a^\dagger_{-p}=a^\dagger_{m-p} $ we obtain:
\begin{itemize}
    \item if $m \equiv 0$\ (mod $4$), then $a^\dagger_p = \frac{1}{m} \left( \frac{m}{2} - |p| \right) (-1)^{\frac{|p|-1}{2}}$ for $p$ odd,  $a^\dagger_p =0$ for $p$ even; 
    
    \item if $m \not\equiv 0$\ (mod $4$), then $a^\dagger_p = \frac{1}{2} (-1)^{\frac{|p|-1}{2}}$ for $p$ odd. For $p$ even we have:
    
 $a^\dagger_{p} = \frac{1}{2} (-1)^{\frac{|p|}{2}}$, if  $m \equiv 1$\ (mod $4$);
\ \  $a^\dagger_p =0$, if  $m \equiv 2$\ (mod $4$);
 \ \  $a^\dagger_{p} = \frac{1}{2} (-1)^{\frac{|p|}{2}+1}$, if  $m \equiv 3$\ (mod $4$).
    
\end{itemize}

Note that $a^\dagger_1=1/2$, and $a^\dagger_3=-1/2$ for any order $m\not=4$. Assume that the cycle graph $C_m$ is positively pseudo-invertible. Then there exists a signature matrix $D=diag(d_1, \dots, d_m)$ signing $A^\dagger$ to a non-negative matrix. Without loss of generality, we may assume $d_1=1$. Then $d_{i+1} A^\dagger_{i+1, i} d_i = d_{i+1} a^\dagger_{1} d_i \ge 0$. Since $a^\dagger_1>0$, then $d_2>0$, and subsequently $d_p=1$, for $p=2,\dots,m$, that is, $D=I$. As the matrix $A^\dagger$ contains both positive and negative elements for $m\not= 4$, we conclude that $A^\dagger$ is not positively signable for $m\not=4$. Similarly, assuming $A^\dagger$ is negatively signable matrix, i.e. $D A^\dagger D\le 0$, by the signature matrix $D$ leads to the conclusion $D=diag(1,-1,1, \dots , (-1)^{m+1})$. For $m\ge 5$ we have $d_1 A^\dagger_{14} d_4= d_1 a^\dagger_3 d_4 =(-1/2)(-1)>0$, a contradiction. For the triangle $C_3$ of order $m=3$ we have $d_3 A^\dagger_{31} d_1 = A^\dagger_{31} =a^\dagger_2 =1/2$,  a contradiction. 
For $m=4$ the matrix $A^\dagger$ is proportional to the adjacency matrix $A$ of the cycle graph $C_4$. It is positively signable by the diagonal matrix $D=diag(1,1,1,1)$, and negatively signable by the signature matrix $D=diag(1,-1,1,-1)$. Hence $C_4$ is the only signable cycle graph. It is positively and negatively signable.

The path graph $P_m$ has the tridiagonal adjacency matrix $A$ where $A_{ij}=1$ for $|i-j|=1$, and $A_{ij}=0$, otherwise. It follows from the general results on the inverses of tridiagonal Toeplitz matrices due to Da Fonseca and Petronilho \cite[Corollary 4.2, Section 3]{fonseca} that the Toeplitz matrix $A$ is invertible for any even order $m$, and $det(A)=(-1)^{\frac{m}{2}}$. Furthermore, for $i\le j$, and $m$ even we have
\[
(A^{-1})_{ij} = (A^{-1})_{ji} = \left\{
\begin{array}{rl}
0, & \text{if $i$ is even, or $j$ is odd,}  \\ 
1, & \text{if $i+j \equiv 1$\ (mod $4$), $i$ is odd, and $j$ is even,} \\
-1,& \text{if $i+j \equiv 3$\ (mod $4$), $i$ is odd, and $j$ is even.} 
\end{array}
\right. 
\]
For $i\le j$, and $m$ odd we have
\[
(A^{\dagger})_{ij} = (A^{\dagger})_{ji} = (-1)^{\frac{j-i-1}{2}}\left\{
\begin{array}{cl}
0, & \text{if $i+j$ is even,}  \\ 
\frac{i}{m+1}, & \text{if $i$ is even, $j$ is odd,} \  \\
\frac{m-j+1}{m+1}, & \text{if $i$ is odd, $j$ is even.} \  
\end{array}
\right. 
\]
If the order $m$ is even, then the matrix $A^{-1}$ is positively signable by the diagonal signature matrix $D=diag(d_1, \dots, d_m)$ where $d_i=1$ for $i\equiv 1$, or $i\equiv 2$ (mod $4$), and $d_i=1$ for $i\equiv 3$, or $i\equiv 4$ (mod $4$). It is also negatively signable $D A^{-1} D\le 0$ by the signature matrix $D$ where $d_i=1$ for $i\equiv 1$, or $i\equiv 4$ (mod $4$), and $d_i=-1$ for $i\equiv 2$, or $i\equiv 3$ (mod $4$). That is, the path graph $P_m$ is positively and negatively integrally invertible bipartite graph for any even order $m$.

For the order $m=3$ we have $A^\dagger = \frac{1}{2} A$. Therefore, the path $P_3$ is positively and negatively pseudo-invertible bipartite graph. On the other hand, if $m\ge 5$ is odd,  then $A^\dagger$ is neither positively nor negatively signable.   In fact, suppose that $A^\dagger$ is positively signable by the signature matrix $D$. As $(A^\dagger)_{i, i+1}>0, (A^\dagger)_{i, i+3}<0$ we have $d_1 d_2 >0, d_2 d_3 >0, d_3 d_4 >0$, but $d_1 d_4 <0$, a contradiction. A similar argument shows that $A^\dagger$ cannot be a negatively signable matrix. 
\end{proof}

For the order $k=2$ the complete graph $K_2$ is just the simple path graph $P_2$ which is a positive and negative integrally invertible graph. It is the only example of a simple connected graph that is self-invertible $K_2^{-1} \cong K_2$ (cf. Harary and Minc \cite{Har}). On the other hand, the complete graph $K_k, k\ge 3$, is neither positive nor negatively pseudo-invertible.

\begin{proposition}
\label{Kkgraph}
The complete graph $K_k$ of order $k\ge 3$ has an invertible adjacency matrix $\mathscr{A}$, but it is neither positively nor negatively    Moore-Penrose   invertible. 
\end{proposition}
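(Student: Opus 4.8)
The plan is to exploit the simple algebraic structure of the adjacency matrix of $K_k$. First I would write $\mathscr{A} = J - I$, where $J$ is the $k\times k$ all-ones matrix and $I$ is the identity. Since the eigenvalues of $J$ are $k$ (with multiplicity one, eigenvector $\mathbf{1}$) and $0$ (with multiplicity $k-1$), the eigenvalues of $\mathscr{A}$ are $k-1$ (once) and $-1$ ($k-1$ times). For $k\ge 3$ neither of these vanishes, so $\det\mathscr{A} = (-1)^{k-1}(k-1)\neq 0$ and $\mathscr{A}$ is invertible. Consequently $\mathscr{A}^\dagger = \mathscr{A}^{-1}$, and signability of the Moore-Penrose inverse reduces to signability of the ordinary inverse.

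Next I would compute $\mathscr{A}^{-1}$ explicitly. Using the ansatz $\mathscr{A}^{-1} = aJ + bI$ together with the identity $J^2 = kJ$, matching the coefficients of $J$ and $I$ in $(J-I)(aJ+bI)=I$ yields $b=-1$ and $a(k-1)=1$, so that
\[
\mathscr{A}^{-1} = \frac{1}{k-1}\, J - I.
\]
Hence the off-diagonal entries of $\mathscr{A}^{-1}$ equal $\frac{1}{k-1}>0$, while the diagonal entries equal $\frac{1}{k-1}-1 = \frac{2-k}{k-1}$, which is strictly negative precisely when $k\ge 3$.

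The decisive observation is that for any $\pm1$ signature matrix $D=\operatorname{diag}(d_1,\dots,d_k)$ the diagonal of $D\mathscr{A}^{-1}D$ is unchanged, since $(D\mathscr{A}^{-1}D)_{ii} = d_i^2 (\mathscr{A}^{-1})_{ii} = (\mathscr{A}^{-1})_{ii}$. Because these diagonal entries are negative for $k\ge 3$, no signature can render $D\mathscr{A}^{-1}D$ non-negative, so $\mathscr{A}^{-1}$ is not positively signable. For negative signability the negative diagonal is admissible, but I would then examine the off-diagonal entries $(D\mathscr{A}^{-1}D)_{ij} = d_i d_j \frac{1}{k-1}$ for $i\neq j$; requiring all of them to be non-positive forces $d_i d_j = -1$ for every pair $i\neq j$. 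This is impossible once $k\ge 3$: for any three indices $i,j,l$ the relations $d_i d_j=-1$ and $d_j d_l=-1$ multiply to $d_i d_l = (d_i d_j)(d_j d_l) = 1$ (using $d_j^2=1$), contradicting $d_i d_l=-1$ (equivalently, among three signs in $\{\pm1\}$ two must coincide). Thus $\mathscr{A}^{-1}$ is not negatively signable either.

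I do not anticipate a serious obstacle here; the argument is elementary once the inverse is written explicitly. The only point requiring care is the observation that diagonal entries are invariant under conjugation by a signature matrix, which immediately rules out positive signability and confines the negative-signability question to a counting obstruction on the signs that is infeasible for $k\ge 3$.
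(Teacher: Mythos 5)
Your proposal is correct and follows essentially the same route as the paper: both compute $\mathscr{A}^{-1}=\frac{1}{k-1}J-I$, rule out positive signability via the sign-invariant negative diagonal, and rule out negative signability by the pigeonhole observation that among $k\ge 3$ signs two must coincide, forcing a positive off-diagonal entry. Your version merely adds an explicit eigenvalue computation for invertibility and a more detailed derivation of the inverse, which the paper states directly.
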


\begin{proof}
The adjacency matrix $\mathscr{A}$ of the complete graph $G^{\mathscr{A}}=K_k$ and its inverse matrix $\mathscr{A}^{-1}$ have the form:
\[
\mathscr{A}={\bf 1} {\bf 1}^T - I = \left(\alpha_{ij}\right)_{i,j=1,\dots,k}, \qquad \mathscr{A}^{-1}=\frac{1}{k-1}{\bf 1} {\bf 1}^T - I,
\]
where ${\bf 1}= (1,\dots, 1)^T\in\mathbb{R}^k$, and $\alpha_{ii}=0, \alpha_{ij}=1$ for $i\not=j$. Let $\mathscr{D}=diag(d_1, \dots, d_k)$ be a signature matrix. Then, inspecting the diagonal terms of $\mathscr{D}\mathscr{A}^{-1}\mathscr{D}$ we conclude $(\mathscr{D}\mathscr{A}^{-1} \mathscr{D})_{ii} = (1/(k-1) -1) (d_{ii})^2<0$. Therefore, $G^{\mathscr{A}}$ cannot be positively pseudo-invertible. On the other hand, the matrix $\mathscr{A}^{-1}$ contains positive off-diagonal and negative diagonal entries. Therefore, any signature matrix $\mathscr{D}$ that signs $\mathscr{A}^{-1}$ to a non-positive matrix should have two elements $d_{i'}$ and $d_{j'}$ of the same sign, $i'\not= j'$. Then $(\mathscr{D} \mathscr{A}^{-1} \mathscr{D})_{i'j'} = d_{i'}  (\mathscr{A}^{-1})_{i'j'} d_{j'} >0$, a contradiction. Therefore, $G^{\mathscr{A}}=K_k$ is neither positively nor negatively pseudo-invertible graph, as claimed.
\end{proof}

\begin{figure}[ht]
    \centering

    \includegraphics[width=.25\textwidth]{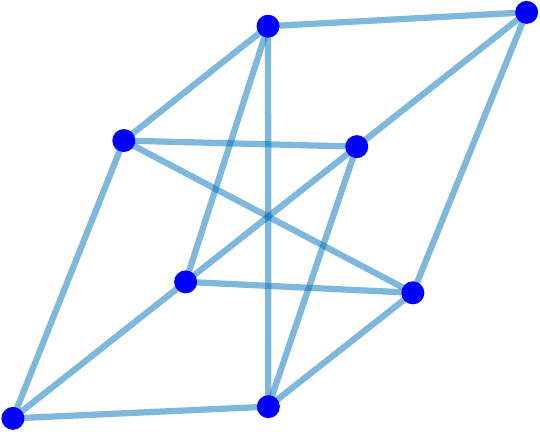}
    \qquad\qquad 
    \includegraphics[width=.25\textwidth]{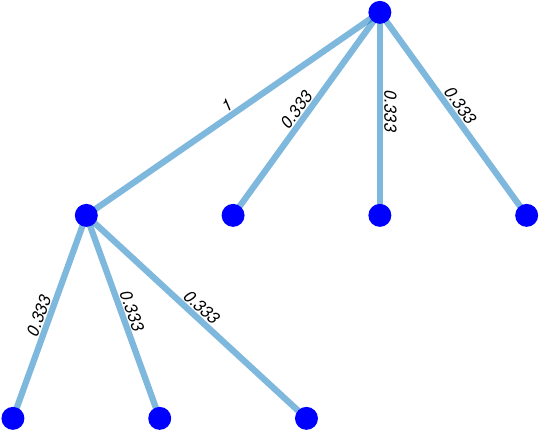}

    \caption{\small The uncomplete positively and negatively psudoinvertible bipartite graph $K_{m,m}^{-e}$ with $m=4$ (left) and its pseudo-inverse weighted graph $(K_{m,m}^{-e})^\dagger$ (right).
    }
    \label{fig-graf-Kmm-e}
\end{figure}

In the next proposition, we show that the complete bipartite graph $K_{m_1,m_2}$ of the order $m=m_1 + m_2$ 
is positively and negatively pseudo-invertible. Notice that $K_{m-1,1}=S_m$ is just the star graph and $K_{2,2}=C_4$ is the cycle graph.

\begin{proposition}
The complete multipartitioned graph $K_{m_1,m_2}$ of the order $m=m_1 + m_2$ is positively and negatively pseudo-invertible. The pseudo-inverse graph $(K_{m_1,m_2})^\dagger$ is the weighted bipartite graph $K_{m_1,m_2}$ with all weights equal to $1/(m_1 m_2)$.
\end{proposition}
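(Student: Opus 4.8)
The plan is to compute the Moore-Penrose inverse $A^\dagger$ explicitly and show that it is a scalar multiple of $A$ itself. Labelling the vertices so that the first $m_1$ belong to one part and the remaining $m_2$ to the other, the adjacency matrix of $K_{m_1,m_2}$ has the block form
\[
A = \begin{pmatrix} 0 & J \\ J^T & 0 \end{pmatrix}, \qquad J = \mathbf{1}_{m_1}\mathbf{1}_{m_2}^T,
\]
where $J$ is the $m_1\times m_2$ all-ones matrix and $\mathbf{1}_k\in\mathbb{R}^k$ is the all-ones vector. The claim I would establish is that $A^\dagger = \frac{1}{m_1 m_2}A$.

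First I would prove the cubic identity $A^3 = m_1 m_2\, A$. Since $J J^T = m_2\,\mathbf{1}_{m_1}\mathbf{1}_{m_1}^T$ and $J^T J = m_1\,\mathbf{1}_{m_2}\mathbf{1}_{m_2}^T$, the square $A^2$ is block-diagonal with these two blocks; multiplying once more by $A$ and using the inner products $\mathbf{1}_{m_2}^T\mathbf{1}_{m_2}=m_2$ and $\mathbf{1}_{m_1}^T\mathbf{1}_{m_1}=m_1$ collapses everything back to $m_1 m_2\, A$. This is the only computational step of substance. Given the identity, verifying the four Moore-Penrose axioms (\ref{MPmn}) for the candidate $A^\dagger = \frac{1}{m_1 m_2}A$ is immediate: one has $A A^\dagger A = \frac{1}{m_1 m_2}A^3 = A$ and $A^\dagger A A^\dagger = \frac{1}{(m_1 m_2)^2}A^3 = \frac{1}{m_1 m_2}A = A^\dagger$, while the two symmetry conditions $(A A^\dagger)^T = A A^\dagger$ and $(A^\dagger A)^T = A^\dagger A$ hold automatically because $A$ and $A^\dagger$ are symmetric and proportional, hence commuting. (Equivalently, one could invoke the rank-one pseudo-inverse formula $J^\dagger = \frac{1}{m_1 m_2}J^T$ together with the standard formula for the Moore-Penrose inverse of a block-antidiagonal matrix.) By uniqueness of the Moore-Penrose inverse, this yields $A^\dagger = \frac{1}{m_1 m_2}A$.

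The signability claims then follow with essentially no further work. Since $A^\dagger = \frac{1}{m_1 m_2}A$ has only non-negative entries, it is positively signable by the trivial signature $D = I$ (Definition \ref{def:pseudoinv-matrix}), and the associated pseudo-inverse graph $(K_{m_1,m_2})^\dagger$ has adjacency matrix $D A^\dagger D = \frac{1}{m_1 m_2}A$, that is, the weighted copy of $K_{m_1,m_2}$ in which every edge carries weight $1/(m_1 m_2)$. For negative signability I would take the bipartite signature matrix $D$ with $d_i = +1$ for the $m_1$ vertices of the first part and $d_i = -1$ for the $m_2$ vertices of the second part; then $D A D = -A$, so $D A^\dagger D = -\frac{1}{m_1 m_2}A \le 0$ and $-D A^\dagger D = \frac{1}{m_1 m_2}A$ recovers the same weighted graph. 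Hence $K_{m_1,m_2}$ is both positively and negatively pseudo-invertible, with exactly the stated pseudo-inverse graph.

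The argument has no genuine obstacle; the single point requiring care is that $A^\dagger$ must be the true Moore-Penrose inverse and not merely some reflexive generalized inverse, which is precisely why I keep track of the two symmetry axioms. Here they are the easy ones, being automatic from the proportionality $A^\dagger \propto A$, so the entire content of the proposition is concentrated in the identity $A^3 = m_1 m_2\, A$.
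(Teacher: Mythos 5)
Your proof is correct and reaches the same key fact as the paper, namely $A^\dagger = \frac{1}{m_1 m_2}A$, followed by the identical choice of signature matrices $D=\mathrm{diag}(I,I)$ and $D=\mathrm{diag}(I,-I)$; the only difference is that you verify the candidate via the cubic identity $A^3=m_1m_2A$ and the four axioms, whereas the paper invokes the rank-one formula $(\mathscr{E}_{m_1,m_2})^\dagger=\frac{1}{m_1m_2}\mathscr{E}_{m_2,m_1}$ for the off-diagonal block, a route you yourself note parenthetically. This is essentially the same approach, and no gaps are present.
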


\begin{proof}
Let $\mathscr{E}_{m_1, m_2}$ be an $m_1\times m_2$ matrix consisting of ones. It is easy to verify that $(\mathscr{E}_{m_1, m_2})^\dagger = (1/(m_1 m_2)) \mathscr{E}_{m_2, m_1}$. Therefore, the adjacency matrix $A$  of the graph $K_{m_1,m_2}$, and its   Moore-Penrose   inverse matrix $A^\dagger$ have the form
\[
A = \left(
\begin{array}{cc}
0 &  \mathscr{E}_{m_1, m_2}\\
(\mathscr{E}_{m_1, m_2})^T & 0
\end{array}
\right), \quad
A^\dagger  = \frac{1}{m_1 m_2} 
\left(
\begin{array}{cc}
0 &  \mathscr{E}_{m_1, m_2}\\
(\mathscr{E}_{m_1, m_2})^T & 0
\end{array}
\right) = \frac{1}{m_1 m_2} A .
\]
The matrix $A^\dagger$ is positively (negatively) signable by the signature matrix $D=diag(I,I)$ ($D=diag(I, -I)$). The proof of the proposition follows. 
\end{proof}

Let us denote by $K_{m,m}^{-e}$ the noncomplete bipartite graph constructed from the complete bipartite graph $K_{m,m}$ by deleting exactly one edge. In \cite{Pavlikova2022-DM},  Pavl\'{i}kov\'a, \v{S}ev\v{c}ovi\v{c}, and \v{S}ir\'a\v{n} showed that its spectrum consists of $2m -4$ zeros and four real eigenvalues $\lambda^{\pm,\pm}=\pm\left( 1-m \pm \sqrt{m^2 + 2m -3}\right)/2$. In the following proposition we will prove the positive and negative pseudo-invertibility of $K_{m,m}^{-e}$, and we completely characterize the pseudo-inverse weighted graph $(K_{m,m}^{-e})^\dagger$. 

\begin{proposition}
    \label{Kmm-e}
The bipartite noncomplete graph $K_{m,m}^{-e}$ is positively and negatively pseudo-invertible. The pseudo-inverse graph $(K_{m,m}^{-e})^\dagger$ is the weighted graph consisting of two star graphs $S_m$ having edge weights equal to $1/(m-1)$ and connected through the central vertices by an edge with unit weight. 
\end{proposition}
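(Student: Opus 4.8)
The plan is to reduce everything to computing the Moore--Penrose inverse of a single $m\times m$ block. Label the two colour classes $U=\{u_1,\dots,u_m\}$ and $W=\{w_1,\dots,w_m\}$ and delete the edge $u_1w_1$; then the adjacency matrix has the bipartite block form $A=\left(\begin{smallmatrix}0 & B\\ B^{T} & 0\end{smallmatrix}\right)$ with $B=\mathscr{E}_{m,m}-E_{11}$, where $E_{11}=e_1e_1^{T}$ records the deleted edge and $e_1=(1,0,\dots,0)^T$. Note that $B$ is symmetric. First I would record the block formula for the pseudo-inverse: for $A=\left(\begin{smallmatrix}0&B\\ C&0\end{smallmatrix}\right)$ one has $A^{\dagger}=\left(\begin{smallmatrix}0&C^{\dagger}\\ B^{\dagger}&0\end{smallmatrix}\right)$, which I would justify by verifying the four axioms (\ref{MPmn}) directly from the Moore--Penrose identities for $B$ and $C$. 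With $C=B^{T}=B$ this yields $A^{\dagger}=\left(\begin{smallmatrix}0&B^{\dagger}\\ B^{\dagger}&0\end{smallmatrix}\right)$, so the whole problem collapses to evaluating $B^{\dagger}$.

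The computational heart, and the step I expect to be the main obstacle, is the evaluation of $B^{\dagger}$. The key observation is that $B$ has rank $2$: rows $2,\dots,m$ all equal ${\bf 1}^{T}$, while row $1$ equals ${\bf 1}^{T}-e_1^{T}$, so the range of $B$ (hence of $B^{\dagger}$) is $\operatorname{span}\{{\bf 1},e_1\}$. I would therefore use the symmetric ansatz
\[
B^{\dagger}=p\,{\bf 1}{\bf 1}^{T}+q\,({\bf 1}e_1^{T}+e_1{\bf 1}^{T})+r\,e_1e_1^{T},
\]
whose range lies automatically in $\operatorname{span}\{{\bf 1},e_1\}$. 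Because $B$ is symmetric and this ansatz carries the correct range, the single identity $BB^{\dagger}B=B$ already forces $B^{\dagger}$ to be the genuine pseudo-inverse: writing $\reals^{m}=\operatorname{range}(B)\oplus\ker(B)$ orthogonally, any symmetric matrix supported on $\operatorname{span}\{{\bf 1},e_1\}$ kills $\ker(B)$, and on $\operatorname{range}(B)$, where $B$ is invertible, $BXB=B$ pins down $X=(B|_{\operatorname{range}(B)})^{-1}$. Expanding $BB^{\dagger}B$ with the elementary relations ${\bf 1}^{T}{\bf 1}=m$ and ${\bf 1}^{T}e_1=1$ reduces the matrix equation to a $3\times 3$ linear system, which I expect to solve as $p=0$, $q=1/(m-1)$, $r=-(m+1)/(m-1)$. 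Reading off entries, $B^{\dagger}$ has $-1$ in position $(1,1)$, the value $1/(m-1)$ along the remainder of the first row and first column, and zeros elsewhere.

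With $B^{\dagger}$ in hand, signability and the graph structure follow quickly. The matrix $A^{\dagger}$ carries exactly one negative weight, $-1$, on the position of the deleted edge $u_1w_1$, weights $1/(m-1)$ on the edges $u_1w_j$ $(j\ge2)$ and $u_iw_1$ $(i\ge2)$, and zeros otherwise; its support is a tree (a double star joined at the centres $u_1,w_1$), so there is no parity obstruction to signing. I would exhibit the signature $D=\operatorname{diag}(d_u,d_w)$ with $d_{u_1}=1$, $d_{u_i}=-1$ $(i\ge2)$, $d_{w_1}=-1$, $d_{w_j}=1$ $(j\ge2)$ and check entrywise that $DA^{\dagger}D\ge0$: the $(u_1,w_1)$ weight becomes $+1$ while every $1/(m-1)$ weight stays positive. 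Negative signability is then immediate on bipartite grounds, since replacing $d_w$ by $-d_w$ flips every entry of the off-diagonal block, giving $D'A^{\dagger}D'=-DA^{\dagger}D\le0$. Finally I would translate $DA^{\dagger}D$ back into a weighted graph: $u_1$ together with $w_2,\dots,w_m$ forms a star $S_m$ with edge weights $1/(m-1)$, likewise $w_1$ with $u_2,\dots,u_m$, and the two centres $u_1,w_1$ are joined by the unit-weight edge, which is precisely the asserted $(K_{m,m}^{-e})^{\dagger}$.
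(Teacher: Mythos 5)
Your proposal is correct and follows essentially the same route as the paper: the same bipartite block decomposition, the same formula $K^{\dagger}=\frac{1}{m-1}\left(e_1\mathbf{1}^{T}+\mathbf{1}e_1^{T}-(m+1)e_1e_1^{T}\right)$ for the off-diagonal block of $A^{\dagger}$, the same signature matrix up to an irrelevant global sign, and the same reading-off of the double-star structure. The only difference is that you flesh out the step the paper labels ``straightforward to verify'' by deriving $K^{\dagger}$ from a rank-two ansatz, which is a welcome addition of detail rather than a different method.
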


\begin{proof}
Without loss of generality, we may assume that the adjacency matrix $A$  of the graph $K_{m,m}^{-e}$ has the form
\[
A = \left(
\begin{array}{cc}
0 & K\\
K^T & 0
\end{array}
\right), \qquad K = \mathbf{1}\mathbf{1}^T - e_1 e_1^T,
\]
where $\mathbf{1}=(1,\dots,1)^T, e_1=(1,0,\dots, 0)^T\in\mathbb{R}^m$.
It is straightforward to verify that the Moore-Penrose inverse $A^\dagger$ has the form
\begin{equation}
A^\dagger = \left(
\begin{array}{cc}
0 & (K^\dagger)^T\\
K^\dagger & 0
\end{array}
\right), \qquad K^\dagger = \frac{1}{m-1} \left( e_1 \mathbf{1}^T  +  \mathbf{1} e_1^T - (m+1) e_1 e_1^T \right).
\label{bipartite-matrix}
\end{equation}
Clearly, if $D_+ = diag(-1, 1, \dots, 1)$, and $D_- = diag(1, -1, \dots, -1)$, then the matrix $D_+ K^\dagger D_-$ contains non-negative elements only. Therefore, $D A^\dagger D\ge 0$ where $D=diag(D_+, D_-)$. Thus, the graph $G^A=K_{m,m}^{-e}$ is positively pseudo-invertible. If we take the signature matrix $D=diag(D_+, - D_-)$, then $D A^\dagger D\le 0$. As a consequence, the graph $K_{m,m}^{-e}$ is also negatively pseudo-invertible. 

Finally, $(K^\dagger)_{11}= -1$, and $(K^\dagger)_{1j}= (K^\dagger)_{i1} = 1/(m-1)$ for $i,j\not=1$. Therefore, the pseudo-inverse graph $(K_{m,m}^{-e})^\dagger$ is the weighted graph consisting of two star graphs $S_m$ having edge weights equal to $1/(m-1)$ and connected through the central vertices by an edge with unit weight (see Fig.~\ref{fig-graf-Kmm-e}). 
\end{proof}

The spectrum of a complete multipartioned graph $K_{m_1, \dots, m_k}$ has been investigated by Delorme \cite{Del} (see also S. Pavl\'{\i}kov\'{a}, D. \v{S}ev\v{c}ovi\v{c}, J. \v{S}ir\'a\v{n} \cite{Pavlikova2022-DM}). In what follows, we propose a novel concept of $G^\mathscr{A}$-complete multipartitioned graph denoted by $G^\mathscr{A}_{m_1, \dots, m_k}$. It is a natural generalization of a complete multipartitioned graph $K_{m_1, \dots, m_k}$. If $G^\mathscr{A} = K_k$ is the complete graph, then $G^\mathscr{A}_{m_1, \dots, m_k}$ is just the complete multipartioned graph $K_{m_1, \dots, m_k}$.

\begin{definition}
    \label{GAmultipartitioned}
    Let $G^\mathscr{A}$ be a simple connected vertex labeled graph of the order $k$ with an adjacency matrix $\mathscr{A}$, and vertex labels $\{1,\dots, k\}$. Assume $m_1, \dots, m_k\ge 1$. We denote by $G^\mathscr{A}_{m_1, \dots, m_k}$ the $G^\mathscr{A}$-complete multipartitioned vertex labelled graph of the order $m=m_1 + \dots + m_k$, constructed as follows: Each vertex $i$ of $G^\mathscr{A}$ is replaced by the set $V_i$ of $m_i$ vertices. Each vertex of $V_i$ is connected to every vertex of $V_j$ provided there is an edge connecting the vertex $i$ and $j$ in the original graph $G^\mathscr{A}$, that is, $\mathscr{A}_{ij}=1$. 
\end{definition}

\begin{proposition}
\label{GAmultipartitioned-theorem}
Let $G^\mathscr{A}$ be a simple connected vertex labelled graph of the order $k$ with an adjacency matrix $\mathscr{A}$, and vertex labels $\{1,\dots, k\}$. Assume $m_1, \dots, m_k\ge 1$. Let $G^\mathscr{A}_{m_1, \dots, m_k}$ be the $G^\mathscr{A}$-complete multipartitioned graph of order $m=m_1 + \dots + m_k$. Then
\begin{itemize}
    \item[i)] its spectrum $\sigma(G^\mathscr{A}_{m_1, \dots, m_k})$ consists of the zero eigenvalue with multiplicity $m-k$, and all eigenvalues of the $k\times k$ matrix $\mathscr{M}^{1/2} \mathscr{A} \mathscr{M}^{1/2}$ where $\mathscr{M} =diag(m_1, \dots, m_k)$. 
    
    \item[ii)]  The   Moore-Penrose   inverse matrix $(\mathscr{M}^{1/2} \mathscr{A} \mathscr{M}^{1/2})^\dagger$ is positively (negatively) signable iff the graph $G^\mathscr{A}_{m_1, \dots, m_k}$ is positively (negatively) pseudo-invertible. 
    
    \item[iii)]  Suppose that the adjacency matrix $\mathscr{A}$ is invertible. Then the graph $G^\mathscr{A}$ is positively (negatively) invertible iff the graph $G^\mathscr{A}_{m_1, \dots, m_k}$ is positively (negatively) pseudo-invertible. 
\end{itemize}
\end{proposition}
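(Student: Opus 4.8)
The plan is to exploit a rank-reducing factorization of the adjacency matrix $A$ of $G^\mathscr{A}_{m_1,\dots,m_k}$ that exposes $\mathscr{M}^{1/2}\mathscr{A}\mathscr{M}^{1/2}$ as its essential $k\times k$ core. Writing each all-ones block as $\mathscr{E}_{m_i,m_j}=\mathbf{1}_{m_i}\mathbf{1}_{m_j}^T$, the construction in Definition~\ref{GAmultipartitioned} gives $A=B\mathscr{A}B^T$, where $B=\operatorname{diag}(\mathbf{1}_{m_1},\dots,\mathbf{1}_{m_k})$ is the $m\times k$ block-diagonal matrix whose $i$-th column is the all-ones vector supported on $V_i$. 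A direct computation gives $B^TB=\mathscr{M}$, so the normalized matrix $\hat B:=B\mathscr{M}^{-1/2}$ has orthonormal columns, $\hat B^T\hat B=I_k$, and
\[
A=\hat B\,\mathscr{C}\,\hat B^T,\qquad \mathscr{C}:=\mathscr{M}^{1/2}\mathscr{A}\mathscr{M}^{1/2}.
\]
For part (i) I would read the spectrum off this compression. Because $\hat B$ is an isometry onto its column space, every eigenpair $(\lambda,v)$ of the symmetric matrix $\mathscr{C}$ yields $A\hat Bv=\hat B\mathscr{C}v=\lambda\hat Bv$ with $\hat Bv\neq0$, so all $k$ eigenvalues of $\mathscr{C}$ occur in $\sigma(A)$; meanwhile $\operatorname{range}(A)\subseteq\operatorname{range}(\hat B)$ forces the orthogonal complement, of dimension $m-k$, into $\ker A$. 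Thus $\sigma(G^\mathscr{A}_{m_1,\dots,m_k})$ is exactly $\sigma(\mathscr{C})$ together with the eigenvalue $0$ of multiplicity $m-k$.

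Next I would establish the clean formula $A^\dagger=\hat B\,\mathscr{C}^\dagger\hat B^T$ by checking the four axioms (\ref{MPmn}) directly, using $\hat B^T\hat B=I_k$ and the symmetry of $\mathscr{C}\mathscr{C}^\dagger$ and $\mathscr{C}^\dagger\mathscr{C}$. Unwinding $\hat B=B\mathscr{M}^{-1/2}$, this says that $A^\dagger$ is block-constant: its $(i,j)$ block equals $c_{ij}\,\mathscr{E}_{m_i,m_j}$ with $c_{ij}:=(\mathscr{C}^\dagger)_{ij}/\sqrt{m_im_j}$, whose sign agrees with that of $(\mathscr{C}^\dagger)_{ij}$. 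Part (ii) then follows from a two-way sign transfer. For the forward direction, a signature $\mathscr{D}=\operatorname{diag}(\delta_1,\dots,\delta_k)$ with $\mathscr{D}\mathscr{C}^\dagger\mathscr{D}\ge0$ lifts to the block-constant signature $D=\operatorname{diag}(\delta_1 I_{m_1},\dots,\delta_k I_{m_k})$, whose $(i,j)$ block of $DA^\dagger D$ is $\delta_i\delta_j c_{ij}\mathscr{E}_{m_i,m_j}\ge0$.

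The delicate direction, and the main obstacle, is the converse: recovering a $k\times k$ signature from an arbitrary $m\times m$ signature $D=\operatorname{diag}(d_1,\dots,d_m)$ signing $A^\dagger$. Since every entry of the $(i,j)$ block of $A^\dagger$ shares the value $c_{ij}$, the constraint $d_p c_{ij}d_q\ge0$ for $p\in V_i,\,q\in V_j$ forces $d_p$ to be constant on $V_i$ as soon as some $j$ (possibly $j=i$) has $(\mathscr{C}^\dagger)_{ij}\neq0$. To guarantee this I would rule out a vanishing whole row/column of $\mathscr{C}^\dagger$: since $\mathscr{M}^{1/2}$ is invertible, $\ker\mathscr{C}=\ker\mathscr{A}$, so such a block would make $e_i$ a null vector of $\mathscr{A}$, i.e. an isolated vertex of $G^\mathscr{A}$, contradicting connectivity (the trivial case $k=1$, $\mathscr{A}=0$ being signable either way). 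Hence $D$ is block-constant, its block signs $\delta_i$ define $\mathscr{D}$, and $\mathscr{D}\mathscr{C}^\dagger\mathscr{D}\ge0$; the negative case is identical with reversed inequalities.

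Finally, for part (iii) with $\mathscr{A}$ invertible we have $\mathscr{C}^\dagger=\mathscr{C}^{-1}=\mathscr{M}^{-1/2}\mathscr{A}^{-1}\mathscr{M}^{-1/2}$, so $(\mathscr{C}^{-1})_{ij}$ and $(\mathscr{A}^{-1})_{ij}$ have the same sign. Because conjugation by the positive diagonal matrix $\mathscr{M}^{-1/2}$ preserves the sign of every entry, $\mathscr{C}^{-1}$ is positively (negatively) signable iff $\mathscr{A}^{-1}$ is, i.e. iff $G^\mathscr{A}$ is positively (negatively) invertible. Composing this equivalence with part (ii) closes the argument.
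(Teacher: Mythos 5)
Your proof is correct, and it arrives at exactly the same $k\times k$ core $\mathscr{C}=\mathscr{M}^{1/2}\mathscr{A}\mathscr{M}^{1/2}$ and the same block-constant formula for $A^\dagger$ (coefficients $\mathscr{M}^{-1/2}\mathscr{C}^\dagger\mathscr{M}^{-1/2}$) as the paper, but the packaging differs in a worthwhile way. The paper works directly with the block form $A=\left(\alpha_{ij}\mathscr{E}_{m_i,m_j}\right)$, verifies the Moore--Penrose axioms by multiplying blocks via $\mathscr{E}_{m_i,m_p}\mathscr{E}_{m_p,m_j}=m_p\,\mathscr{E}_{m_i,m_j}$, and obtains the spectrum by an explicit eigenvector analysis showing eigenvectors are constant on each class $V_i$; your factorization $A=\hat B\,\mathscr{C}\,\hat B^T$ with the isometry $\hat B=B\mathscr{M}^{-1/2}$ makes the axiom check a one-line computation and gives the kernel dimension $m-k$ immediately from $\mathrm{range}(A)\subseteq\mathrm{range}(\hat B)$, arguably cleaner than the paper's treatment (which tacitly assumes $\lambda\neq0$ when deducing constancy of the eigenvector on each block). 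One remark on part (ii): your careful argument that any signing $D$ of $A^\dagger$ must be constant on each $V_i$ (ruling out a zero row of $\mathscr{C}^\dagger$ via connectedness of $G^{\mathscr{A}}$) is sound but more than is needed — as in the paper, one can simply define $\delta_i:=d_{p_i}$ for an arbitrary representative $p_i\in V_i$ and read off $\delta_i\delta_j(\mathscr{C}^\dagger)_{ij}\ge0$ from the single entry $(p_i,p_j)$ of $DA^\dagger D$, with no structural claim about $D$ required. Part (iii) matches the paper's argument verbatim in substance: conjugation by the positive diagonal matrix $\mathscr{M}^{-1/2}$ preserves entrywise signs, so signability of $\mathscr{C}^{-1}$ and of $\mathscr{A}^{-1}$ are equivalent.
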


\begin{proof}
The adjacency matrix $A$ of $G^\mathscr{A}_{m_1, \dots, m_k}$ has the block form $A= \left(\alpha_{ij}  \mathscr{E}_{m_i,m_j} \right)_{i,j=1,\dots,k}$ where the elements $\alpha_{ij}$ form the adjacency matrix $\mathscr{A} = \left(\alpha_{ij}\right)_{i,j=1,\dots,k}$ of the underlying graph $G^\mathscr{A}$, and $\mathscr{E}_{m_i,m_j}$ is the $m_i\times m_j$ matrix consisting of ones. 

To prove i), the vector $x=(x^{(1)}, \dots, x^{(k)}) \in \mathbb{R}^{m_1} \times \dots \times \mathbb{R}^{m_k} \equiv \mathbb{R}^{m}$, where $x^{(i)} \in \mathbb{R}^{m_i}$, is an eigenvector of the adjacency matrix $A$, that is, $A x = \lambda x$, iff $\sum_{j=1}^k \alpha_{ij} \xi_j  = \lambda x^{(j)}_p$ for each $p=1, \dots, m_i$, and $i=1,\dots, k$, where $\xi_j = \sum_{p=1}^{m_j} x^{(j)}_p = m_j x^{(j)}_1$, that is, $x^{(i)}_1=x^{(i)}_2 = \dots x^{(i)}_{m_j}$. As $x\not=0$, we have $\xi\not=0$.  Therefore, the non-trivial vector $\xi\in\mathbb{R}^k$ is a solution to the linear system of equations $\sum_{j=1}^k \alpha_{ij} m_j \xi_j  = \lambda \xi_i$ for $i=1,\dots, k$. That is $\lambda\in \sigma( \mathscr{A} \mathscr{M}) = \sigma(\mathscr{M}^{1/2} \mathscr{A} \mathscr{M}^{1/2})$, as claimed in part i). 

We search the    Moore-Penrose   inverse matrix $A^\dagger$ in block matrix form $A^\dagger = \left(\beta_{ij}  \mathscr{E}_{m_i,m_j} \right)_{i,j=1,\dots,k}$ where the elements $\beta_{ij}$ form the symmetric matrix $\mathscr{B} = \left(\beta_{ij}\right)_{i,j=1,\dots,k}$. Since $\mathscr{E}_{m_i m_p} \mathscr{E}_{m_p m_j} =m_p \mathscr{E}_{m_i,m_j}$ we have 
\[
A A^\dagger = \left( (\mathscr{A} \mathscr{M} \mathscr{B})_{ij}  \mathscr{E}_{m_i,m_j} \right)_{i,j=1,\dots,k},
\qquad 
A^\dagger A = \left( (\mathscr{B} \mathscr{M} \mathscr{A})_{ij}  \mathscr{E}_{m_i,m_j} \right)_{i,j=1,\dots,k} , 
\]
where $\mathscr{M} =diag(m_1, \dots, m_k)$. Taking $\mathscr{B} = \mathscr{M}^{-1/2} (\mathscr{M}^{1/2} \mathscr{A} \mathscr{M}^{1/2})^{\dagger} \mathscr{M}^{-1/2}$ we obtain 
\[
A A^\dagger  = (A A^\dagger )^T =
\left( 
( \mathscr{M}^{-1/2} (\mathscr{M}^{1/2} \mathscr{A} \mathscr{M}^{1/2}) (\mathscr{M}^{1/2} \mathscr{A} \mathscr{M}^{1/2})^{\dagger} \mathscr{M}^{-1/2} )_{ij}  \mathscr{E}_{m_i,m_j} \right)_{i,j=1,\dots,k}.
\]
Now, it is easy to verify $A A^\dagger A =A, \ A^\dagger A A^\dagger = A^\dagger$, and so $A^\dagger$ is indeed the Moore-Penrose inverse of the adjacency matrix $A$. 

In order to prove statement ii), let us assume that the graph $G^\mathscr{A}_{m_1, \dots, m_k}$ is a positively (negatively) pseudo-invertible graph. Then there exists a $m\times m$ diagonal signature matrix:
\begin{equation}
D = diag(d^{(1)}_1,\dots, d^{(1)}_{m_1}, \ \dots\ ,\  \dots,  d^{(k)}_1,\dots, d^{(k)}_{m_k})
\label{Dmat}
\end{equation}
such that $D A^\dagger D \ge 0$ ($\le 0$). Let $\mathscr{D} = diag( d^{(1)}_1,\dots , d^{(k)}_1)$ be the $k\times k$ signature diagonal matrix. Since $A^\dagger = \left(\beta_{ij}  \mathscr{E}_{m_i,m_j} \right)_{i,j=1,\dots,k}$, then $\mathscr{D} \mathscr{B} \mathscr{D} \ge 0$ ($\le 0$) where $\mathscr{B} = \left(\beta_{ij}\right)_{i,j=1,\dots,k} = \mathscr{M}^{-1/2} (\mathscr{M}^{1/2} \mathscr{A} \mathscr{M}^{1/2})^{\dagger} \mathscr{M}^{-1/2}$. As $\mathscr{D} \mathscr{M}^{-1/2} =  \mathscr{M}^{-1/2} \mathscr{D}$, and $\mathscr{M}^{-1/2}>0$ the matrix $\mathscr{D} (\mathscr{M}^{1/2} \mathscr{A} \mathscr{M}^{1/2})^{\dagger} \mathscr{D}$ has the same sign of elements as the matrix $\mathscr{B}$ we conclude that   Moore-Penrose   inverse matrix $(\mathscr{M}^{1/2} \mathscr{A} \mathscr{M}^{1/2})^\dagger$ is positively (negatively) signable.
On the contrary, if the    Moore-Penrose   inverse matrix $(\mathscr{M}^{1/2} \mathscr{A} \mathscr{M}^{1/2})^\dagger$ is positively (negatively) signable, then the matrix $\mathscr{A}^\dagger$ is signable to a non-negative (nonpositive) matrix by a signature matrix $\mathscr{D} = diag( d^{1}, \dots , d^{k})$. Then the matrix $A^\dagger$ is signable to a positive (negative) matrix by the signature matrix of the form (\ref{Dmat}) with $d^{(i)}_j = d^i$ for $j=1,\dots, m_i, i =1,\dots, k$, and the proof follows. 

To prove iii), we suppose that the matrix $\mathscr{A}$ is invertible. Since the diagonal matrices $\mathscr{D}$ and $\mathscr{M}^{1/2}$ commute, we have $\mathscr{D}(\mathscr{M}^{1/2} \mathscr{A} \mathscr{M}^{1/2})^{\dagger} \mathscr{D} = \mathscr{D}(\mathscr{M}^{1/2} \mathscr{A} \mathscr{M}^{1/2})^{-1} \mathscr{D} =  \mathscr{M}^{-1/2} \mathscr{D} \mathscr{A}^{-1} \mathscr{D} \mathscr{M}^{-1/2}$. It means that the matrix $(\mathscr{M}^{1/2} \mathscr{A} \mathscr{M}^{1/2})^\dagger$ is positively (negatively) signable if and only if the graph $G^\mathscr{A}$ is positively (negatively) pseudo-invertible. The rest of the proof of iii) now follows from part ii).
\end{proof}

\begin{proposition}
The complete multipartitioned graph $K_{m_1, \dots, m_k}$ of the order $m=m_1 + \dots + m_k$, where $k\ge 3$ is neither positively nor negatively pseudo-invertible. If $k<m$, then the adjacency matrix $A$ is singular. 
\end{proposition}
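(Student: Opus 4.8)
The plan is to recognize $K_{m_1,\dots,m_k}$ as the special instance $(K_k)_{m_1,\dots,m_k}$ of the $G^\mathscr{A}$-complete multipartitioned construction, taking the underlying labelled graph $G^\mathscr{A}$ to be the complete graph $K_k$ with adjacency matrix $\mathscr{A} = \mathbf{1}\mathbf{1}^T - I$ (as noted right after Definition \ref{GAmultipartitioned}, the choice $G^\mathscr{A} = K_k$ returns exactly $K_{m_1,\dots,m_k}$). With this identification in hand, both assertions follow by feeding Proposition \ref{Kkgraph} into Proposition \ref{GAmultipartitioned-theorem}, so essentially no fresh computation is required.

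First I would dispatch the pseudo-invertibility claim. Since $k\ge 3$, Proposition \ref{Kkgraph} guarantees that $\mathscr{A}$ is invertible, and hence part iii) of Proposition \ref{GAmultipartitioned-theorem} applies: the graph $K_k = G^\mathscr{A}$ is positively (negatively) invertible if and only if $K_{m_1,\dots,m_k} = G^\mathscr{A}_{m_1,\dots,m_k}$ is positively (negatively) pseudo-invertible. Because $\mathscr{A}$ is invertible we have $\mathscr{A}^\dagger = \mathscr{A}^{-1}$, so for $K_k$ the notions of invertibility and pseudo-invertibility coincide, and Proposition \ref{Kkgraph} states precisely that $K_k$ is neither positively nor negatively Moore-Penrose invertible for $k\ge 3$. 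Transporting this conclusion across the equivalence in part iii) yields that $K_{m_1,\dots,m_k}$ is neither positively nor negatively pseudo-invertible, which is the first assertion.

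For the singularity claim I would invoke part i) of Proposition \ref{GAmultipartitioned-theorem}, which records that the spectrum $\sigma(G^\mathscr{A}_{m_1,\dots,m_k})$ contains the zero eigenvalue with multiplicity $m-k$. When $k<m$ this multiplicity is $m-k\ge 1>0$, so zero lies in $\sigma(A)$ and therefore $A$ is singular; this holds regardless of whether the block $\mathscr{M}^{1/2}\mathscr{A}\mathscr{M}^{1/2}$ contributes further to the kernel, so no additional analysis of that block is needed.

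The argument is brief because the genuine work already resides in Propositions \ref{Kkgraph} and \ref{GAmultipartitioned-theorem}. The only point I expect to require care is the bookkeeping that lets Proposition \ref{Kkgraph} be applied through part iii): one must confirm that the invertibility of $\mathscr{A}$ really does collapse the \emph{invertible} and \emph{pseudo-invertible} terminology for $K_k$, so that there is no mismatch of definitions when the non-invertibility conclusion is pushed forward to $K_{m_1,\dots,m_k}$. Once that identification is made explicit, both halves of the statement drop out immediately.
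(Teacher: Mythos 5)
Your proposal is correct and follows essentially the same route as the paper: identify $K_{m_1,\dots,m_k}$ with $(K_k)_{m_1,\dots,m_k}$, feed Proposition~\ref{Kkgraph} into Proposition~\ref{GAmultipartitioned-theorem}~iii) for the non-signability, and read off the singularity from the zero eigenvalue of multiplicity $m-k$ in part~i). If anything, your explicit appeal to part~i) for the singularity claim is slightly more careful than the paper's proof, which attributes everything to part~iii).
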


\begin{proof}
    
With regard to Proposition~\ref{Kkgraph} the complete graph $K_k, k\ge 3$, is neither positively nor negatively pseudo-invertible.  Its adjacency matrix $\mathscr{A}$ is invertible. The rest of the proof now follows from Proposition~\ref{GAmultipartitioned-theorem}, part iii).

\end{proof}

\begin{figure}[ht]
    \centering
    
    \includegraphics[width=.25\textwidth]{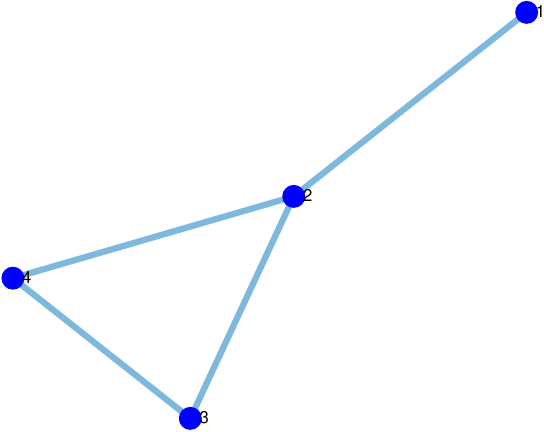} 
    \quad 
    \includegraphics[width=.25\textwidth]{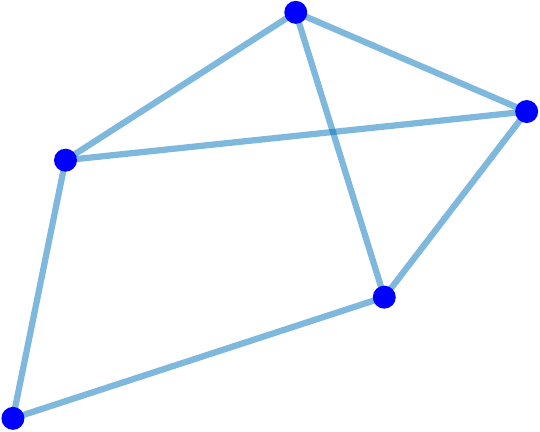}
    \quad 
    \includegraphics[width=.25\textwidth]{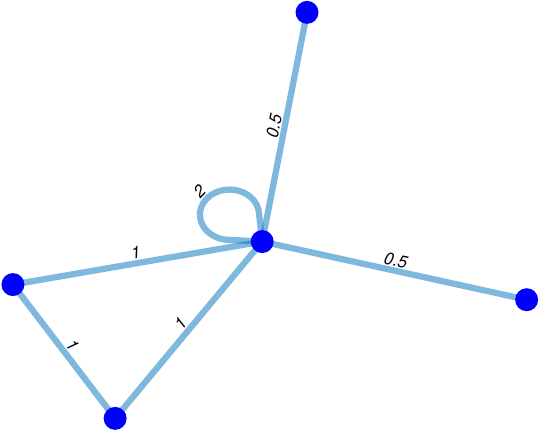}
    
    \caption{\small A positively integrally invertible vertex labeled graph $G^{\mathscr{A}}$ with  $k=4$ vertices (left); the $G^\mathscr{A}$ complete graph $G^{\mathscr{A}}_{1,2,1,1}$ complete graph (middle); the weighted signable pseudo-inverse graph $(G^{\mathscr{A}}_{1,2,1,1})^\dagger$ (right).}
    \label{fig-graf-Ascr-A-pinvA}
    
\end{figure}

\begin{definition}
    \label{GAselfsimilar}
    Let $G^A$ be a simple connected graph of order $m$ with an adjacency matrix $A$. We say that the graph $G^A$ is homothetically self-pseudoinvertible iff $G^A$ is a signable pseudo-invertible graph, and $(G^A)^\dagger\sim G^A$. i.e., there exists $\kappa\in\mathbb{R}$ and a permutation matrix $P$ of order $m$ such that $D A^\dagger D = \kappa P A P^T$ where $D$ is the signature matrix such that $D A^\dagger D$ (or $-D A^\dagger D$) is the adjacency matrix of $G^\dagger$.
\end{definition}

\begin{remark}
Path graphs $P_2$, $P_3$, $P_4$ and the cycle graph $C_4$ are homothetically self-pseudoinvertible graphs. Furthermore, complete bipartite graphs $K_{m_1, m_2}, m=m_1+m_2, m\ge 2$, form an infinite family of homothetically self-pseudoinvertible graphs. In Proposition~\ref{prop-ii-pendant} we show that a graph $G^A$ constructed from a given graph $G^B$ by adding a pendant vertex to each vertex of $G^B$ is always a signable integrally invertible graph whose inverse graph $(G^A)^{-1}$ is homothetically similar to $G^A$. 

\end{remark}

\section{Statistical and extreme properties of eigenvalues and spectral indices of signable  pseudo-invertible and signable integrally invertible graphs}

Let $G^A$ be a simple connected graph with an adjacency matrix $A$. Then $A$ has positive and negative eigenvalues, because $trace(A)=0$. In what follows, we shall denote $\lambda_\pm(G^A)\equiv \lambda_\pm(A)$, the least positive and largest negative eigenvalues of the adjacency matrix $A$.  Let us denote by $\Lambda^{gap}(A)= \lambda_+(A) -  \lambda_-(A)$ and $\Lambda^{ind}(A)= \max(|\lambda_+(A)|, |\lambda_-(A)|)$ the spectral gap and the spectral index of $A$. Furthermore, we define the spectral power $\Lambda^{pow}(A) = \sum_{k=1}^m |\lambda_k|$. Clearly, all three spectral indices $\Lambda^{gap}, \Lambda^{ind}$, and $\Lambda^{pow}$ depend on the positive $\sigma_+(A)=\{\lambda \in \sigma(A), \lambda>0\}$, and negative $\sigma_-(A)=\{\lambda \in \sigma(A), \lambda<0\}$ parts of the spectrum of the matrix $A$. In fact, $\lambda_+(A) = \min\sigma_+(A), \lambda_-(A) = \max\sigma_-(A)$, and $\Lambda^{pow} = \sum_{\lambda\in\sigma_+(A)} \lambda - \sum_{\lambda\in\sigma_-(A)} \lambda = 2 \sum_{\lambda\in\sigma_+(A)} \lambda$. In the context of spectral graph theory, the eigenvalues of the adjacency matrix $A$ representing a structural chemical graph of an organic molecule play an important role. The spectral gap $\Lambda^{gap}(A)$ is also known as the HOMO-LUMO energy separation gap of the energy of the highest occupied molecular orbital (HOMO) and the lowest unoccupied molecular orbital orbital (LUMO). Generally speaking, the molecule is more stable when the spectral gap is larger (cf. Aihara \cite{Aihara1999JCP}).

\begin{figure}[ht]
    \centering
    
    \includegraphics[width=.3\textwidth]{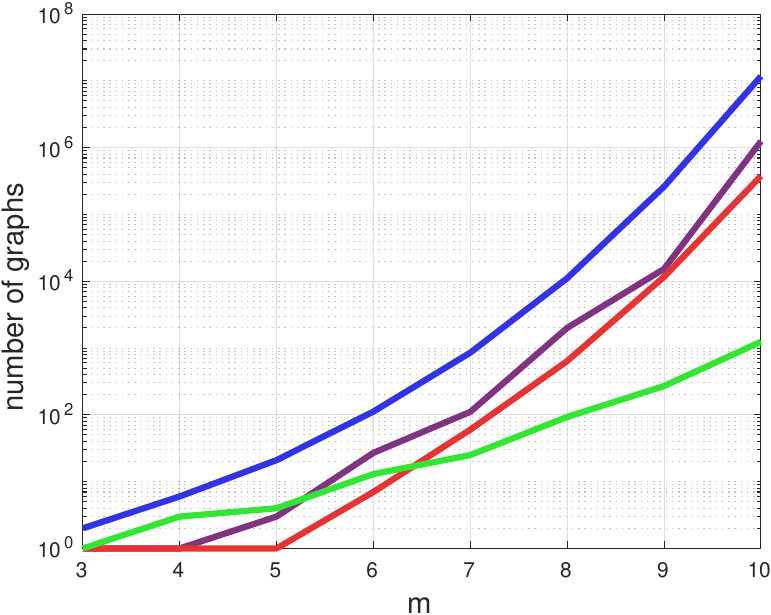} 
    \quad\quad 
    \includegraphics[width=.3\textwidth]{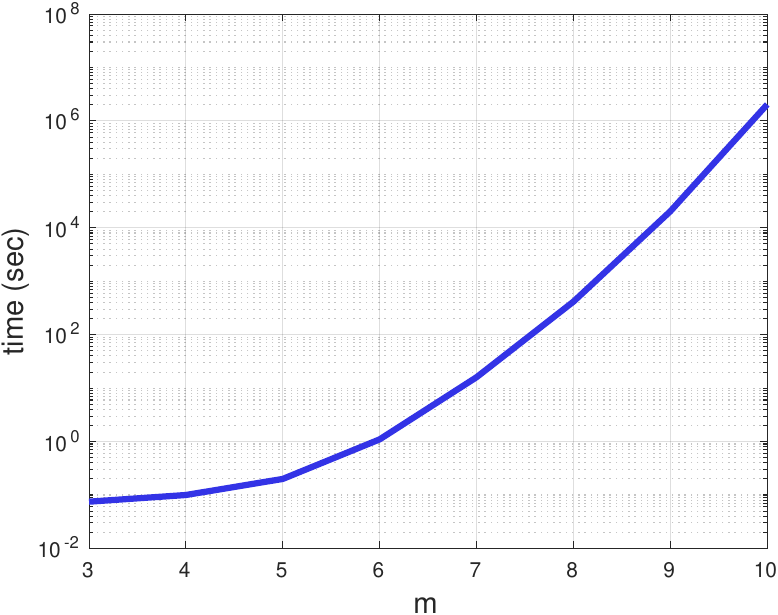}
    
    \caption{\small Left: the number of all simple connected graphs of order $m\le 10$ (blue). The numbers of positively but not negatively pseudo-invertible graphs (magenta), the number of negatively but not positively pseudo-invertible graphs (red), the number of positively and negatively (bipartite) pseudo-invertible graphs (green). Right: computational time complexity of the results summarized in Table~\ref{tab-1}.}
    \label{fig-graf-complexity-approximate}
    
\end{figure}

If we denote by $c_m$ the number of simple nonisomorphic connected graphs on $m$ vertices, then the number $c_m$ can be approximated by the quadratic exponential function: $c_m\approx \omega_0 10^{\omega_1 (m-9)+\omega_2 (m-9)^2}$, 
where $\omega_0=261080, \omega_1=1.4, \omega_2=0.09$ (cf. \cite{Pavlikova2022-DM}). This formula is exact for the order $m=9$ and gives accurate approximation results for other $m\le10$ (see Fig.~\ref{fig-graf-complexity-approximate}, left (blue line)). Moreover, $\log(c_m) = O(m^2)$ as $m\to \infty$. The class of multipartitioned complete graphs is completely characterized by its partitions $\{ m_1, \dots, m_k\}$. The number $\hat{c}_m$ of multipartitioned complete graphs is therefore given by the number of partitions of order $m=m_1 + \dots + m_k$, for $k=1,\dots, m$. According to Hardy-Ramanujan's 1918 result, we have $\hat{c}_m \approx \frac{1}{4 m\sqrt{3}} \exp(\pi\sqrt{2m/3})$, i.e. $\log(\hat{c}_m) = O(\sqrt{m})$ as $m\to \infty$. This means that the number of all multipartitioned complete graphs of a given order $m$ is considerably smaller than the number of all simple connected graphs of that order. 

Recall the following well-known facts regarding the minimal and maximal eigenvalues of a graph. The maximal value of $\lambda_{max}=\lambda_1$ on all simple connected graphs on $m$ vertices is equal to $m-1$, and is reached by the complete graph $K_m$. The minimal value of $\lambda_{max}$ is equal to $2\cos(\pi/(m+1))$, and is achieved for the path graph $P_m$. The lower bound for the minimal eigenvalue $\lambda_{min}=\lambda_m \ge -\sqrt{\lfloor m/2\rfloor\lceil m/2\rceil}$ was independently proved by Constantine  \cite{Con} and Powers \cite{Pow}. The lower bound is attained for the complete bipartite graph $K_{m_1,m_2}$ where $m_1=\lceil m/2\rceil, m_2=\lfloor m/2\rfloor$. The maximal value of $\lambda_{min}$ on all simple connected graphs on the $m$ vertices is equal to $-1$. It is  attained for the complete graph $K_m$. Unfortunately, neither the complete graph $K_k$ nor the complete multipartitioned graph $K_{m_1, \dots, m_k}$ are signable pseudo-invertible graphs for $k\ge 3$. This is why we have to investigate the properties of extreme spectral indices within the class of signable pseudo-invertible graphs that do not include complete graphs $K_k$, or complete multipartitioned graphs $K_{m_1, \dots, m_k}$ with $k\ge 3$.

\begin{table}
\small
\caption{\small The number of all simple connected graphs  $G^A$ on $m\le 10$ vertices, graphs $G^A$ with invertible adjacency matrix ($det(A)\not=0$), and graphs $G^A$ with integrally invertible adjacency matrix ($det(A)=\pm1$). 
Source: own computations \cite{Pavlikova2022-WWW} based on McKay's list of all simple connected graphs \cite{McKay}.
}
 \label{tab-summary}
 \medskip
 
\centering 
\scriptsize
\begin{tabular}{l||r|r|r|r|r|r|r|r|r}
\hline
$m$              &$2$&$3$&$4$&$5$&$6$&$7$&$8$&$9$&$10$  \\
\hline
all graphs       & 1   & 2   & 6   & 21  & 112 & 853 & 11117 & 261080 &11716571 \\
$det(A)\not=0$   & 1   & 1   & 3   &  8  & 52  & 342 &  5724 & 141063 & 7860195 \\
$det(A)=\pm1$      & 1   & -   & 2   &  -  & 29  &   - &  2381 & -      & 1940904 \\
\hline

\end{tabular}

\end{table}

In Table~\ref{tab-summary} we present the number of all simple connected graphs $G^A$ on $m\le 10$ vertices, graphs $G^A$ with an invertible adjacency matrix ($det(A)\not=0$), and graphs $G^A$ with an integrally invertible adjacency matrix ($det(A)=\pm1$) based on our calculations \cite{Pavlikova2022-WWW}, and McKay's list of all simple connected graphs \cite{McKay}. In Table \ref{tab-1} we present the number of positively but not negatively pseudo-invertible graphs ($+$signable). The number of negatively  but not positively  pseudo-invertible graphs ($-$signable). The number of simultaneously positively and negatively invertible and pseudo-invertible graphs ($\pm$signable). Complete characterization of these classes of graphs and their spectrum can be found in \cite{Pavlikova2022-WWW}. 
In Fig.~\ref{fig-graf-complexity-approximate} we depict the dependence of the number of all simple connected graphs of the order $m\le 10$ (blue). The number of (+signable) graphs (magenta), the number (-signable) of graphs (red), and the number of ($\pm$signable) bipartite graphs (green). We also show the computational time complexity of the results summarized in Table~\ref{tab-1}.

Finally, in Table~\ref{tab-summary-ii} we present the number of positively but not negatively integrally invertible graphs ($+$signable). Then the number of negatively  but not positively  integrally invertible graphs ($-$signable). We also present the number of positively and negatively integrally invertible graphs ($\pm$signable).

\begin{table}
\small
\caption{\small The number of positively but not negatively pseudo-invertible graphs ($+$signable). The number of negatively  but not positively  pseudo-invertible graphs ($-$signable). The number of simultaneously positively and negatively invertible and pseudo-invertible graphs ($\pm$signable). Source: own computations \cite{Pavlikova2022-WWW}.
}

\label{tab-1}
\scriptsize

\begin{center}
\begin{tabular}{l||r|r|r|r|r|r|r|r|r}
\hline
        $m$         &$2$&$3$&$4$&$5$&$6$&$7$&$8$&$9$&$10$  \\
\hline
$+$signable          & 0   & 0   & 1   &  3  & 27  & 111 &  2001 & 15310  & 1247128 \\
$-$signable          & 0   & 0   & 0   &  1  &  7  &  60 &   638 & 11643  & 376137  \\
$\pm$signable        & 1   & 1   & 3   &  4  & 13  &  25 &    93 &   270  & 1243     \\
\hline
all signable             & 1   & 1   & 4   &  8  & 47  & 196 &  2732 & 27223  & 1624508 \\
\hline
\end{tabular}

\end{center}

\end{table}

\begin{table}
\small
\caption{\small The number of signable positively but not negatively integrally invertible graphs ($+$signable). The number of negatively  but not positively  integrally invertible graphs ($-$signable). The number of positively and negatively integrally invertible graphs ($\pm$signable). 
Source: own computations \cite{Pavlikova2022-WWW} based on McKay's list of all simple connected graphs \cite{McKay}.
}
 \label{tab-summary-ii}
 \medskip
 
\centering 
\scriptsize
\begin{tabular}{l||r|r|r|r|r|r|r|r|r}
\hline
$m$                 &$2$&$3$&$4$&$5$&$6$&$7$&$8$&$9$&$10$  \\
\hline
$+$signable          & 0   & -   & 1   &  -  & 20  &   - &  1626 & -      & 1073991 \\
$-$signable          & 0   & -   & 0   &  -  &  4  &   - &  260  & -      & 105363  \\
$\pm$signable        & 1   & -   & 1   &  -  &  4  &   - &  25   & -      & 349     \\
\hline
all signable         & 1   & 0   & 2   &  0  & 28  & 0   &  1911 & 0   & 1179703 \\
\hline

\end{tabular}

\end{table}

\subsection{All simple connected signable pseudo-invertible graphs}

\begin{table}[ht]
 \caption{\small Descriptive statistics of the maximal(minimal) eigenvalues $\lambda_{max}$ ($\lambda_{min}$), spectral gap $\Lambda^{gap}$, spectral index $\Lambda^{ind}$, and spectral power $\Lambda^{pow}$ for all signable pseudo-invertible simple connected graphs on $m\le 10$ vertices.
}
\scriptsize
\begin{center}
\hglue -0.5truecm \begin{tabular}{l||l|l|l|l|l|l|l|l}
\hline
$m$  &  $3$ & $4$ & $5$ & $6$ & $7$ & $8$   & $9$    & $10$\\
\hline\hline
$E(\lambda_{max})$ &
 1.4142 & 
 1.8800 & 
 2.4066 & 
 2.8107 & 
 3.3106 & 
 3.7471 & 
 4.2491 & 
 4.7634 
\\
$\sigma(\lambda_{max})$  &
 -- & 
 0.2510 & 
 0.3993 & 
 0.5074 & 
 0.5640 & 
 0.6160 & 
 0.6553 & 
 0.6309  
\\
${\mathcal S}(\lambda_{max})$ &
  -- & 
 0.1191 & 
 -0.0036 & 
 -0.0101 & 
 -0.1702 & 
 -0.0756 & 
 -0.0282 & 
 -0.1001 
\\
${\mathcal K}(\lambda_{max})$  &
  -- & 
 1.3963 & 
 1.6392 & 
 2.0501 & 
 2.2927 & 
 2.7045 & 
 2.9230 & 
 2.9675 
\\
$\max(\lambda_{max})$ &
 1.4142 $(\pm)$& 
 2.1701 $(+)$& 
 2.9354 $(-)$& 
 3.7321 $(+)$& 
 4.4253 $(+)$& 
 5.9164 $(-)$& 
 7.0315 $(-)$& 
 8.1231 $(-)$
\\
$\min(\lambda_{max})$ &
 1.4142 $(\pm)$& 
 1.6180 $(\pm)$& 
 1.8478 $(\pm)$& 
 1.8019 $(\pm)$& 
 1.9319 $(\pm)$& 
 1.8794 $(\pm)$& 
 1.9616 $(\pm)$& 
 1.9190 $(\pm)$
\\
\hline
$E(\lambda_{min})$  &
 -1.4142 & 
 -1.7078 & 
 -1.9739 & 
 -2.1375 & 
 -2.3811 & 
 -2.4676 & 
 -2.6930 & 
 -2.7947 
\\
$\sigma(\lambda_{min})$ &
 -- & 
 0.2201 & 
 0.2708 & 
 0.3243 & 
 0.3311 & 
 0.3198 & 
 0.3259 & 
 0.2838 
\\
${\mathcal S}(\lambda_{min})$  &
  -- & 
 -0.4543 & 
 -0.4438 & 
 -0.6230 & 
 -0.5321 & 
 -0.6962 & 
 -0.5011 & 
 -0.4281 
\\
${\mathcal K}(\lambda_{min})$  &
  -- & 
 1.8907 & 
 2.2189 & 
 2.9493 & 
 2.9185 & 
 3.8455 & 
 3.3743 & 
 3.4984 
\\
$\max(\lambda_{min})$ &
 -1.4142 $(\pm)$& 
 -1.4812 $(+)$& 
 -1.6180 $(-)$& 
 -1.6180 $(-)$& 
 -1.7823 $(-)$& 
 -1.6180 $(-)$& 
 -1.8384 $(+)$& 
 -1.6180 $(-)$
\\
$\min(\lambda_{min})$  &
 -1.4142 $(\pm)$& 
 -2.0000 $(\pm)$& 
 -2.4495 $(\pm)$& 
 -3.0000 $(\pm)$& 
 -3.4641 $(\pm)$& 
 -4.0000 $(\pm)$& 
 -4.4721 $(\pm)$& 
 -5.0000 $(\pm)$
\\
\hline
$\max(\Lambda^{gap})$ &
 2.8284 $(\pm)$& 
 4.0000 $(\pm)$& 
 4.8990 $(\pm)$& 
 6.0000 $(\pm)$& 
 6.9282 $(\pm)$& 
 8.0000 $(\pm)$& 
 8.9442 $(\pm)$& 
 10.000 $(\pm)$
\\
$\max(\Lambda^{ind})$ &
 1.4142 $(\pm)$& 
 2.0000 $(\pm)$& 
 2.4495 $(\pm)$& 
 3.0000 $(\pm)$& 
 3.4641 $(\pm)$& 
 4.0000 $(\pm)$& 
 4.4721 $(\pm)$& 
 5.0000 $(\pm)$
\\
$\max(\Lambda^{pow})$ &
 2.8284 $(\pm)$& 
 4.9624     $(+)$& 
 7.1068     $(-)$& 
 8.8284     $(+)$& 
 11.2176    $(-)$& 
 14.000     $(+)$& 
 16.7446    $(-)$& 
 19.4136    $(-)$
\\
\hline
$\min(\Lambda^{gap})$  &
 2.8284 $(\pm)$& 
 1.2360 $(\pm)$& 
 1.0806     $(-)$& 
 0.7423     $(+)$& 
 0.6429     $(-)$& 
 0.3877     $(+)$& 
 0.3310     $(-)$& 
 0.1647     $(+)$
\\
$\min(\Lambda^{ind})$&
 1.4142 $(\pm)$& 
 0.6180 $(\pm)$& 
 0.6180     $(-)$& 
 0.4142 $(\pm)$& 
 0.3573     $(-)$& 
 0.2624     $(+)$& 
 0.1937     $(-)$& 
 0.1092     $(+)$
\\
$\min(\Lambda^{pow})$ &
 2.8284 $(\pm)$& 
 3.4642 $(\pm)$& 
 4.0000 $(\pm)$& 
 4.4722 $(\pm)$& 
 4.8990 $(\pm)$& 
 5.2916 $(\pm)$& 
 5.6568 $(\pm)$& 
 6.0000 $(\pm)$
\\
\hline
\end{tabular}
\end{center}
\label{tab-spektrumpositivepseudo}
\end{table}

In Table~\ref{tab-spektrumpositivepseudo} we present descriptive statistics of  maximal(minimal) eigenvalues $\lambda_{max}$($\lambda_{min}$), spectral gap $\Lambda^{gap}$, spectral index $\Lambda^{ind}$, and spectral power $\Lambda^{pow}$ for all signable pseudo-invertible simple connected graphs on $m\le 10$ vertices.
The symbols $E,\sigma, {\mathcal S}$ and ${\mathcal K}$ represent the mean value, standard deviation, skewness, and kurtosis of the corresponding sets of eigenvalues $\lambda_{max}$, and $\lambda_{min}$, respectively. Skewness ${\mathcal S}(\lambda_{max})$ is close to zero, and kurtosis ${\mathcal K}(\lambda_{max})$ tends to $3$. This means that the distribution of maximal eigenvalues of all signable pseudo-invertible simple connected graphs on $m$ vertices becomes normally distributed as $m$ increases. On the other hand, ${\mathcal S}(\lambda_{min})<0$ and ${\mathcal K}(\lambda_{min})>3$. It reveals that the distribution of minimal eigenvalues is slightly skewed to the left, and it has a leptokurtic distribution with fat tails. The signatures $(+)/(-)/(\pm)$ after the extreme value indicate (positive)/(negative)/(positive and negative) pseudo-invertibility of the graph attaining this extreme value.

Recall that the lower bound for $\lambda_{min}$ is achieved for the complete bipartite graph $K_{m_1,m_2}$ where $m_1=\lceil m/2\rceil, m_2=\lfloor m/2\rfloor$. The graph $K_{m_1,m_2}$ is positively and negatively pseudo-invertible. In Fig.~\ref{fig-graf-lambdamax-signablepseudoinvertible-even} we show signable pseudo-invertible graphs on $3\le m \le10$ vertices with the maximal value of $\lambda_{max}$. For values of $\lambda_{max}$ we refer to Table~\ref{tab-spektrumpositivepseudo}.

\begin{figure}[ht]
    \centering

    \includegraphics[height=.15\textwidth, angle=90]{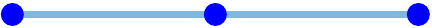}
    \ 
    \includegraphics[width=.2\textwidth]{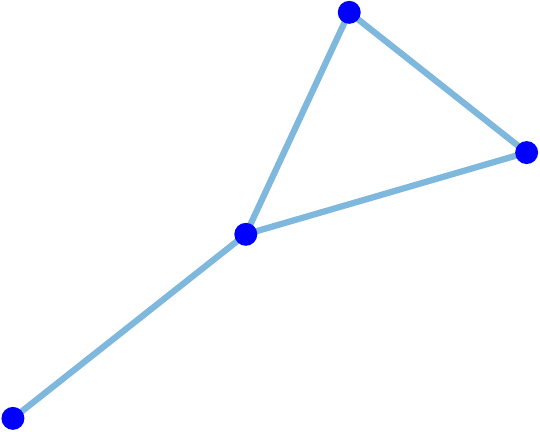}
    \ 
    \includegraphics[width=.2\textwidth]{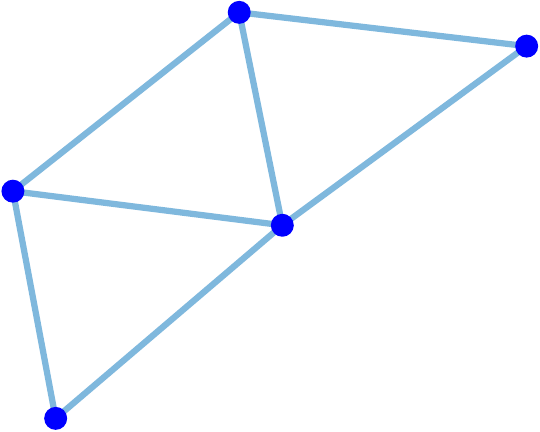}
    \ 
    \includegraphics[width=.2\textwidth]{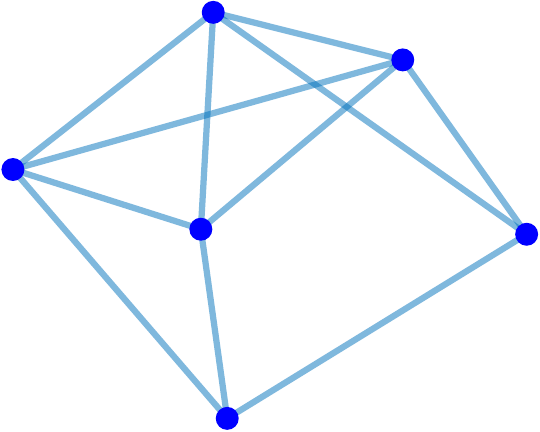}
    
$m=3$ \hskip 2.4truecm $m=4$ \hskip 2.6truecm $m=5$ \hskip 2.8truecm $m=6$
\vskip 0.5truecm 

    \includegraphics[width=.23\textwidth]{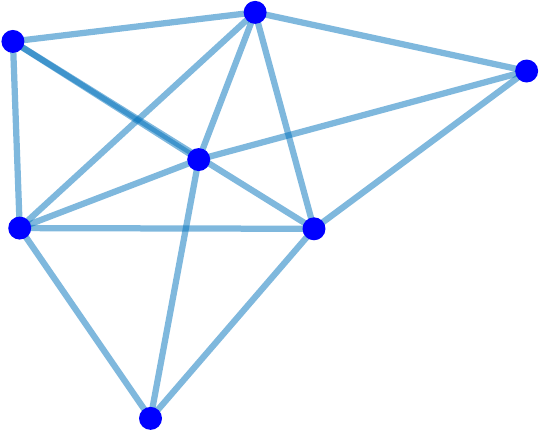}
    \ 
    \includegraphics[width=.23\textwidth]{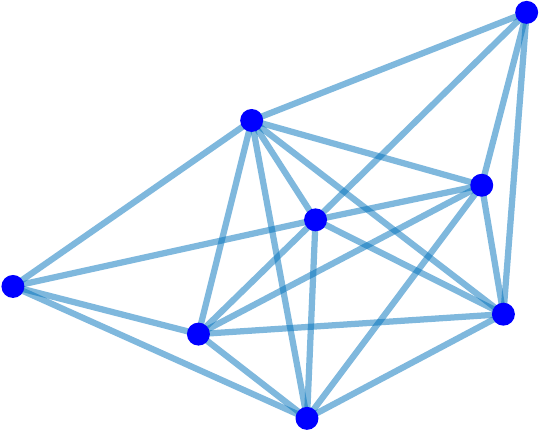}
    \ 
    \includegraphics[width=.23\textwidth]{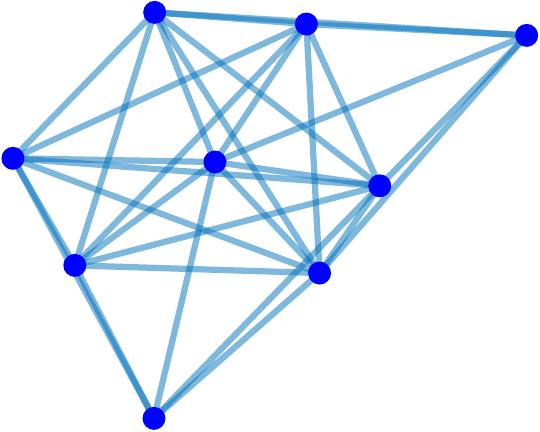}
    \
    \includegraphics[width=.24\textwidth]{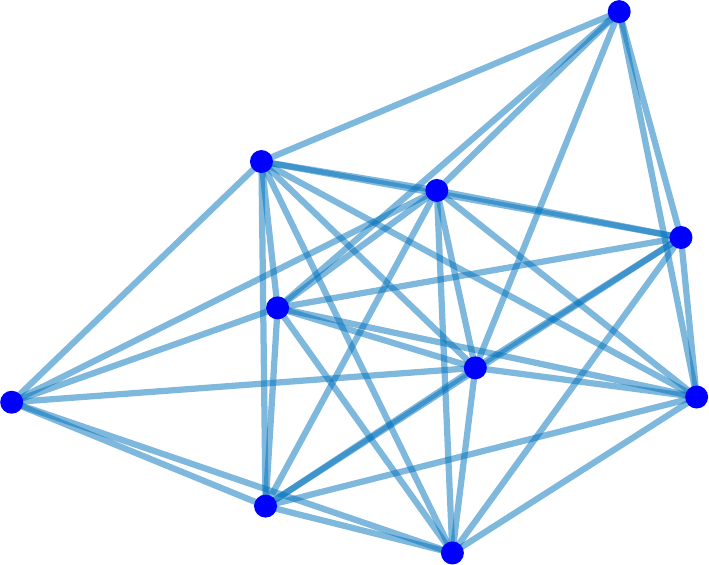}

$m=7$ \hskip 2.4truecm $m=8$ \hskip 2.6truecm $m=9$ \hskip 2.8truecm $m=10$

    \caption{\small Signable pseudo-invertible graphs on $3\le m \le10$ vertices with a maximal value of $\lambda_{max}$ (see Table~\ref{tab-spektrumpositivepseudo}).
    }
    \label{fig-graf-lambdamax-signablepseudoinvertible-even}
\end{figure}

In the next proposition, we derive upper and lower bounds of the spectral indices $\Lambda^{gap}, \Lambda^{ind}$, and $\Lambda^{pow}$ in the class of all signable pseudo-invertible graphs. 

\begin{proposition}
Assume $G^A$ is a signable pseudo-invertible graph of order $m$. Then
\begin{itemize}
    \item [i)]  $\Lambda^{gap}(G^A)\le 2\sqrt{\lfloor m/2\rfloor \lceil m/2\rceil}$, and $\Lambda^{ind}(G^A)\le \sqrt{\lfloor m/2\rfloor \lceil m/2\rceil}$. The equalities are attained by the complete bipartite graph $K_{m_1,m_2}$ where $m_1=\lceil m/2\rceil, m_2=\lfloor m/2\rfloor$.
    \item [ii)] $\Lambda^{pow}(G^A)\ge 2\sqrt{m-1}$. The equality is attained by the complete bipartite star graph $K_{m-1,1}\equiv S_m$.
\end{itemize}
\end{proposition}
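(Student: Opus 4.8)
The plan is to derive both parts from three ingredients: the identity $\operatorname{trace}(A)=0$, the Perron--Frobenius theorem applied to the \emph{non-negative} adjacency matrix of the pseudo-inverse graph, and the Constantine--Powers bound $|\lambda_{min}(A)|\le\sqrt{\lfloor m/2\rfloor\lceil m/2\rceil}$ (Constantine \cite{Con}, Powers \cite{Pow}). For part i) I would first note that, since $\lambda_+(A)>0>\lambda_-(A)$, one has $\Lambda^{gap}=\lambda_+(A)+|\lambda_-(A)|\le 2\max(\lambda_+(A),|\lambda_-(A)|)=2\Lambda^{ind}$, so the whole of part i) follows once I prove the single estimate $\Lambda^{ind}(G^A)\le\sqrt{\lfloor m/2\rfloor\lceil m/2\rceil}$. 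My route to this is the intermediate claim $\Lambda^{ind}(G^A)\le|\lambda_{min}(A)|$, after which Constantine--Powers closes the argument; note that both inequalities are simultaneously tight for the extremal bipartite graph below.

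To prove $\Lambda^{ind}\le|\lambda_{min}(A)|$ I would use the signature matrix $D$ and recall that the nonzero eigenvalues of $A^\dagger$ are the reciprocals of those of $A$, so $\lambda_{max}(A^\dagger)=1/\lambda_+(A)$ and $|\lambda_{min}(A^\dagger)|=1/|\lambda_-(A)|$. If $G^A$ is positively pseudo-invertible, then $B=DA^\dagger D\ge 0$ and Perron--Frobenius gives $\lambda_{max}(B)\ge|\lambda_{min}(B)|$, i.e. $\lambda_+(A)\le|\lambda_-(A)|$; hence $\Lambda^{ind}=|\lambda_-(A)|\le|\lambda_{min}(A)|$ trivially. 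The hard part will be the negatively pseudo-invertible case: applying the same theorem to $C=-DA^\dagger D\ge 0$ reverses the inequality to $\lambda_+(A)\ge|\lambda_-(A)|$, so now $\Lambda^{ind}=\lambda_+(A)$ is the \emph{smallest positive} eigenvalue and must be bounded directly. This genuinely requires signability rather than Perron--Frobenius alone: the complete graph $K_m$ (not pseudo-invertible for $m\ge3$) has $\lambda_+=m-1>\sqrt{\lfloor m/2\rfloor\lceil m/2\rceil}$. My plan here is to prove $\lambda_+(A)\le|\lambda_{min}(A)|$ by exploiting the diagonal condition $(A^\dagger)_{ii}\le 0$ together with $\operatorname{trace}(A)=0$ and the Perron bound $\lambda_{max}(A)\ge|\lambda_{min}(A)|$; should these coarse spectral identities prove insufficient, the fallback is to work on the connected non-negative weighted pseudo-inverse graph $C$, where $\lambda_+(A)=1/|\lambda_{min}(C)|$, and to lower-bound $|\lambda_{min}(C)|$ by testing the Rayleigh quotient of $C$ against a vector supported on a single weighted edge. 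This negative-signability bound on the smallest positive eigenvalue is the step I expect to be the main obstacle.

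The equality in part i) is then verified by the explicit spectrum of $K_{m_1,m_2}$ with $m_1=\lceil m/2\rceil$, $m_2=\lfloor m/2\rfloor$, namely $\pm\sqrt{m_1m_2}$ together with the eigenvalue $0$ of multiplicity $m-2$; this graph is positively and negatively pseudo-invertible and yields $\Lambda^{ind}=\sqrt{\lfloor m/2\rfloor\lceil m/2\rceil}$ and $\Lambda^{gap}=2\sqrt{\lfloor m/2\rfloor\lceil m/2\rceil}$.

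For part ii) the argument is purely spectral. Writing $P=\sum_{\lambda>0}\lambda=\sum_{\lambda<0}|\lambda|$, where the two sums agree because $\operatorname{trace}(A)=0$, gives $\Lambda^{pow}=2P$. Since the positive eigenvalues are non-negative reals, $\sum_{\lambda>0}\lambda^2\le P^2$ and likewise for the negative part, so $2|E|=\operatorname{trace}(A^2)=\sum_k\lambda_k^2\le 2P^2$, whence $P\ge\sqrt{|E|}$ and $\Lambda^{pow}\ge 2\sqrt{|E|}$. Connectivity forces $|E|\ge m-1$, giving $\Lambda^{pow}\ge 2\sqrt{m-1}$. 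Equality demands $|E|=m-1$ (a tree) and exactly one nonzero eigenvalue of each sign; the unique connected graph meeting both is the star $K_{m-1,1}\equiv S_m$, whose spectrum $\pm\sqrt{m-1}$ with $0$ of multiplicity $m-2$ gives $\Lambda^{pow}=2\sqrt{m-1}$, and which is signable pseudo-invertible as a complete bipartite graph.
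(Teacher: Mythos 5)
Your part ii) is correct and complete: the chain $\Lambda^{pow}=2P$, $2|E|=\operatorname{trace}(A^2)=\sum_k\lambda_k^2\le 2P^2$, $|E|\ge m-1$ is exactly the content of the Caporossi et al. bound that the paper simply cites, and your equality analysis (one eigenvalue of each sign forces a complete multipartite graph, a tree among these is the star) is sound. Your reduction of part i) to the single inequality $\Lambda^{ind}\le\sqrt{\lfloor m/2\rfloor\lceil m/2\rceil}$ via $\Lambda^{gap}\le 2\Lambda^{ind}$ is also fine, and the positively signable half is genuinely handled by your Perron--Frobenius argument ($\lambda_{max}(A^\dagger)\ge|\lambda_{min}(A^\dagger)|$ gives $\lambda_+(A)\le|\lambda_-(A)|$, so $\Lambda^{ind}=|\lambda_-(A)|\le|\lambda_{min}(A)|$ and Constantine--Powers finishes).

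However, the negatively pseudo-invertible case of part i) is a genuine gap, and the two routes you sketch for it do not close. The trace identity goes the wrong way: negative signability gives $\operatorname{trace}(A^\dagger)\le 0$, i.e.\ $\sum_{\lambda>0}1/\lambda\le\sum_{\lambda<0}1/|\lambda|$, which only yields $|\lambda_-(A)|\le n_-\,\lambda_+(A)$ --- an upper bound on $|\lambda_-|$ in terms of $\lambda_+$, not the reverse. The Rayleigh-quotient fallback requires exhibiting an off-diagonal entry of $A^\dagger$ of size at least roughly $2/m$ after subtracting the diagonal contributions, and nothing in signability guarantees such an entry exists. The missing ingredient, which is how the paper argues, is structural rather than spectral-inverse: by Smith's theorem a connected graph has $\lambda_2(A)\le 0$ iff it is a complete multipartite graph $K_{m_1,\dots,m_k}$, and the paper has already shown these are not signable pseudo-invertible for $k\ge 3$ (the case $k=2$ is checked directly). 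Hence every remaining signable pseudo-invertible graph has $\lambda_2(A)>0$, so $\lambda_+(A)\le\lambda_2(A)\le\lfloor m/2\rfloor-1\le\sqrt{\lfloor m/2\rfloor\lceil m/2\rceil}$ by the Powers/Cvetkovi\'c--Simi\'c bound on the second largest eigenvalue. This bounds $\lambda_+(A)$ directly, with no need to compare it to $|\lambda_{min}(A)|$; combined with Constantine--Powers for $|\lambda_-(A)|$ it gives both inequalities of part i). You should either import these two facts or find a substitute for them; as written, the step you yourself flag as ``the main obstacle'' is not merely hard but unsupported.
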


\begin{proof}
The proof of part i) is based on the properties of the second largest eigenvalue $\lambda_2(A)$ of the graph $G^A$. With regard to Powers \cite{Pow, Powers1989}, and Cvetkovi\'c and Simi\'c \cite{Cvetkovic1995} we have the following estimate for the second largest eigenvalue: $-1\le \lambda_2(A)\le \lfloor m/2\rfloor -1$.   According to  Smith \cite{smith},   a simple connected graph $G^A$ has exactly one positive eigenvalue $\lambda_1(A)=\lambda_{max}>0$, i.e. $\lambda_2(A)\le 0$, if and only if it is a complete multipartitioned graph $K_{m_1, \dots, m_k}$ where $1\le m_1\le \dots \le m_k$ denote the sizes of the partitions, $m_1+ \dots + m_k =m$, and $k\ge2$ is the number of partitions (see also   Cvetkovi\'c \emph{et al.}  \cite[Theorem 6.7]{CvDS}). Therefore, for a graph $G^A$ different from any complete multipartitioned graph $K_{m_1, \dots, m_k}$ we have $\lambda_2(A)>0$, and consequently, 
$
- \sqrt{\lfloor m/2\rfloor \lceil m/2\rceil} \le \lambda_{min}(A)\le \lambda_-(A)<0<\lambda_+(A) \le  \lambda_2(A)\le \lfloor m/2\rfloor -1.
$
Hence the spectral gap $\Lambda^{gap}(G^A)=\lambda_+(A)-\lambda_-(A)\le
\sqrt{\lfloor m/2\rfloor \lceil m/2\rceil} + \lfloor m/2\rfloor -1\le 2\sqrt{\lfloor m/2\rfloor \lceil m/2\rceil}$ because $\lfloor m/2\rfloor -1\le \sqrt{\lfloor m/2\rfloor \lceil m/2\rceil}$ for any $m$. Similarly, $\Lambda^{ind}(A)\le \sqrt{\lfloor m/2\rfloor \lceil m/2\rceil}$. 

To prove ii), we recall that $\Lambda^{pow}(G^A) \ge 2\sqrt{m-1}$ for a general simple connected graph $G^A$ (cf. Caporossi \emph{et al.} \cite[Theorem 2]{Cap}). The minimal value of $\Lambda^{pow}(G^A)=2\sqrt{m-1}$ is attained by the star graph $S_m\equiv K_{m-1,1}$ which is a signable positively and negatively pseudo-invertible bipartite graph, and the proof follows.
\end{proof}

\begin{figure}[ht]
    \centering

    \includegraphics[height=.15\textwidth, angle=90]{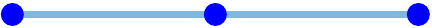}
    \ 
    \includegraphics[width=.2\textwidth]{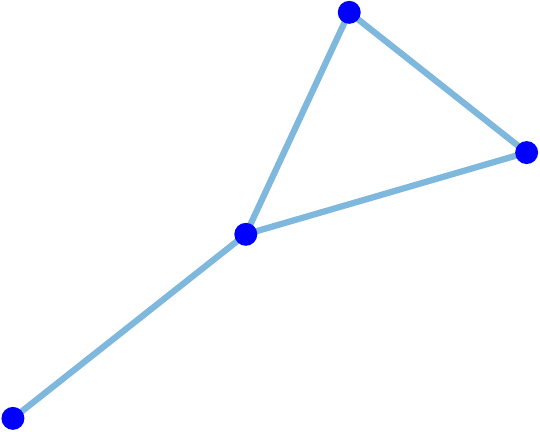}
    \ 
    \includegraphics[width=.2\textwidth]{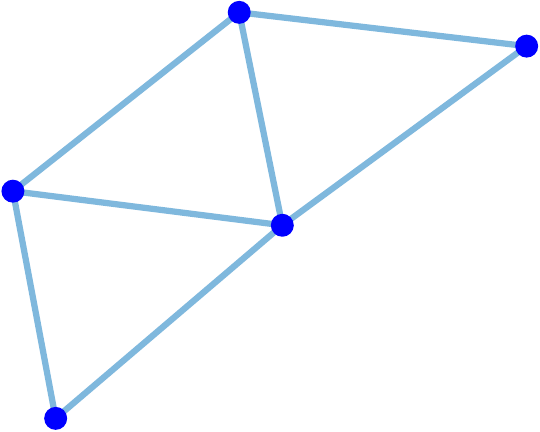}
    \ 
    \includegraphics[width=.2\textwidth]{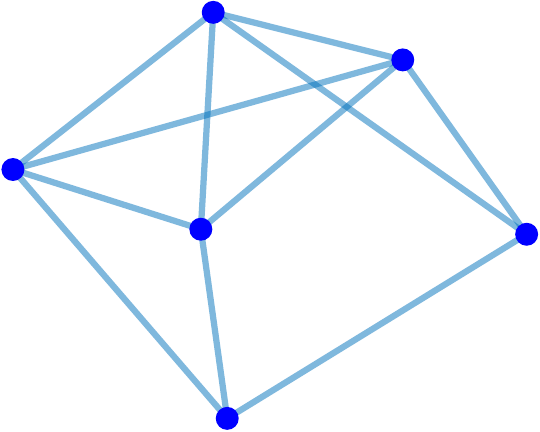}

$m=3$ \hskip 2.4truecm $m=4$ \hskip 2.6truecm $m=5$ \hskip 2.8truecm $m=6$

\vglue 0.4truecm

    \includegraphics[width=.2\textwidth]{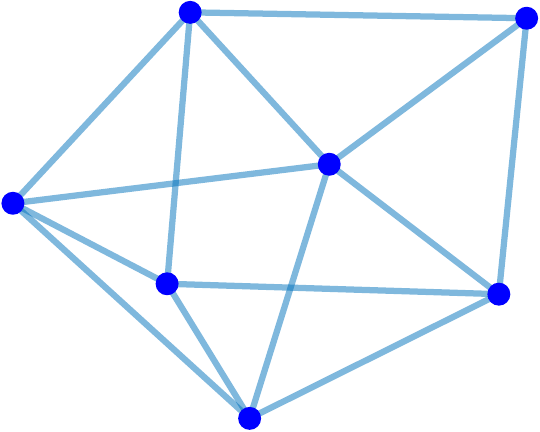}
    \ 
    \includegraphics[width=.2\textwidth]{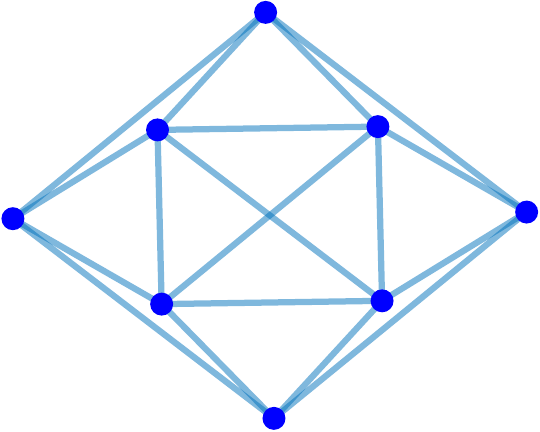}
    \ 
    \includegraphics[width=.2\textwidth]{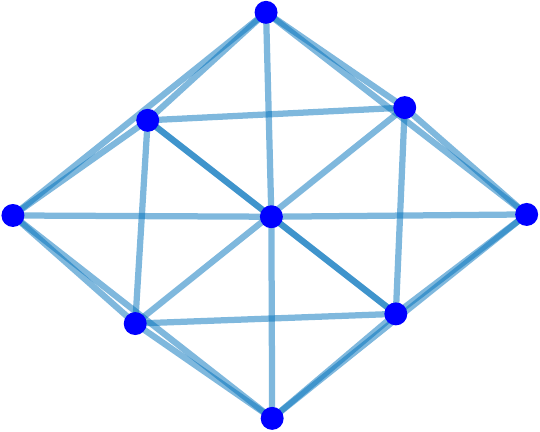}
    \ 
    \includegraphics[width=.25\textwidth]{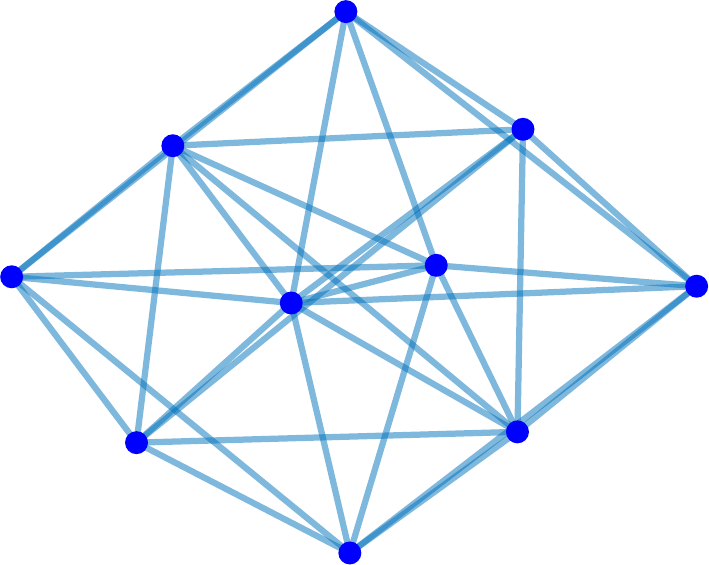}

$m=7$ \hskip 2.4truecm $m=8$ \hskip 2.6truecm $m=9$ \hskip 2.8truecm $m=10$

    \caption{\small Signable pseudo-invertible graphs on $3\le m\le 10$ vertices with a maximal value of $\Lambda^{pow}$ (see Table~\ref{tab-spektrumpositivepseudo}).
    }
    \label{fig-graf-lambdpoweramax-signablepseudoinvertible}
\end{figure}

In Fig.~\ref{fig-graf-lambdpoweramax-signablepseudoinvertible} we present signable pseudo-invertible graphs on the $4\le m \le 10$ vertices with maximal values of $\Lambda^{pow}$.
Graphs that achieve minimal values of $\Lambda^{gap}$ are shown in Fig.~\ref{fig-graf-lambdaHLgap-signablepseudoinvertible}. For $m$ even the minimal values of $\Lambda^{gap}$ and $\Lambda^{ind}$ are attained by signable integrally invertible graphs. 

\begin{figure}[ht]
    \centering
\bigskip
    \includegraphics[height=.15\textwidth, angle=90]{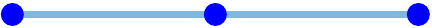 }
    \quad
    \includegraphics[width=.2\textwidth]{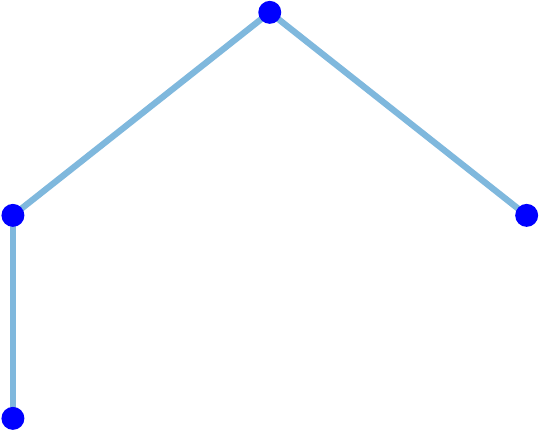 }
    \    
    \includegraphics[width=.2\textwidth]{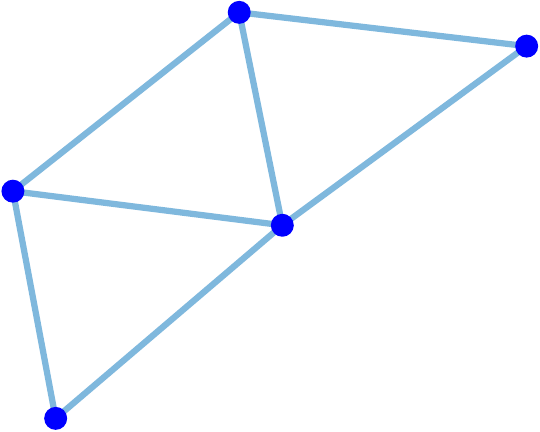 }
    \ 
    \includegraphics[width=.2\textwidth]{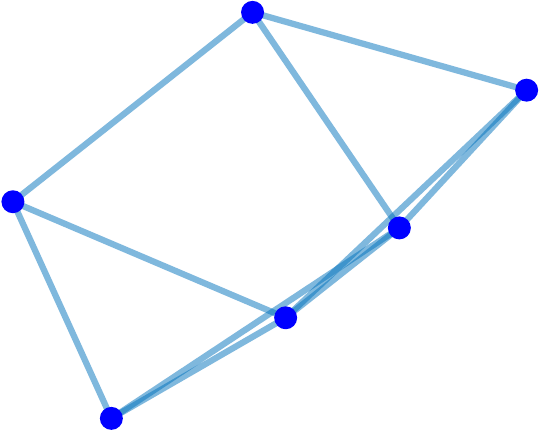 }

    $m=3$ \hskip 2.4truecm $m=4$ \hskip 2.6truecm $m=5$ \hskip 2.8truecm $m=6$

\medskip 
    \includegraphics[width=.2\textwidth]{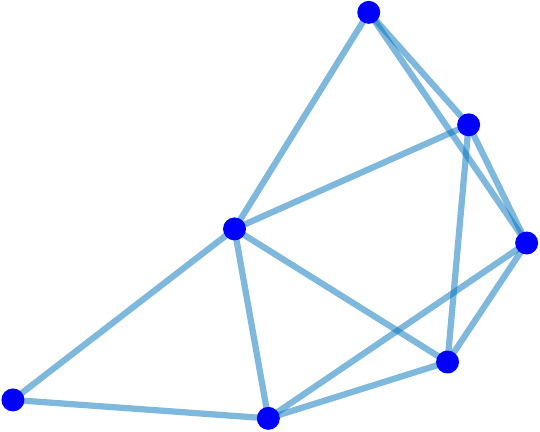 }
    \ 
    \includegraphics[width=.2\textwidth]{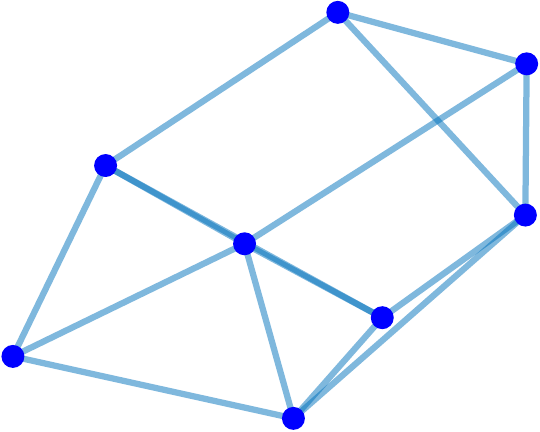 }
    \ 
    \includegraphics[width=.2\textwidth]{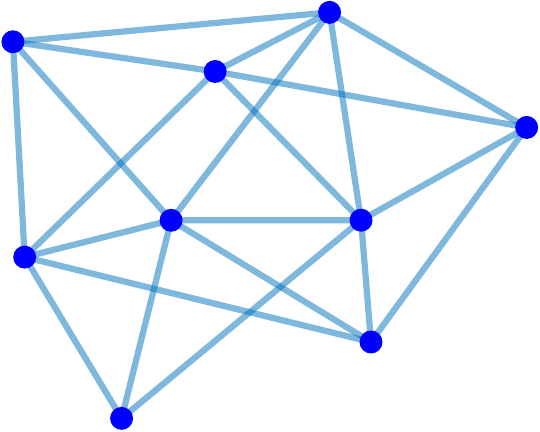}
    \ 
    \includegraphics[width=.26\textwidth]{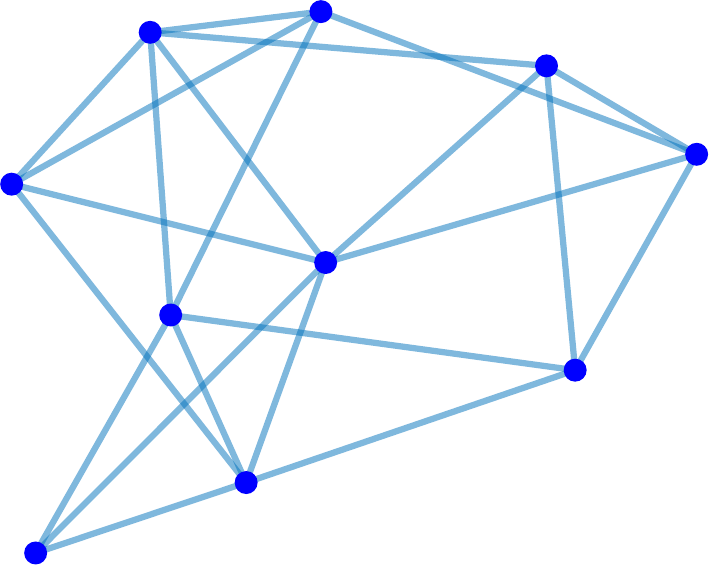}

$m=7$ \hskip 2.4truecm $m=8$ \hskip 2.6truecm $m=9$ \hskip 2.8truecm $m=10$

\caption{\small Signable pseudo-invertible graphs on $3\le m\le 10$ vertices with a minimal value of $\Lambda^{gap}$ (see Table~\ref{tab-spektrumpositivepseudo}).
    }
 \label{fig-graf-lambdaHLgap-signablepseudoinvertible}
\end{figure}

\subsection{All simple connected signable integrally invertible graphs}

In this section, we present signable integrally invertible graphs attaining extreme values of spectral indices. First, we present a simple construction of an infinite family of signable integrally invertible graphs. For arbitrary simple connected graph $G^B$ (not necessarily even pseudo-invertible) we can construct a signable integrally invertible graph just by adding a pendant vertex to each vertex of $G^B$. An adjacency matrix $A$ is integrally invertible ($|det(A)|=1$), iff $m$ is even. If $G_B$ is a graph, the graph $G_A$ formed from $G_B$ by adding a pendent vertex to each vertex of $G_B$ is known in the literature as the corona graph.

\begin{proposition}\label{prop-ii-pendant}
Assume $G^A$ is a graph of even order $m$ constructed from a simple connected $G^B$ of order $m/2$ by adding a pendant vertex to each vertex of $G^B$. Then
\begin{itemize}
    \item[i)] $G^A$ is a signable negatively integrally invertible graph.
    \item[ii)] If $G^B$ is a bipartite graph, then $G^A$ is a signable positively integrally invertible bipartite graph. 
    \item[iii)] $\Lambda^{pow}(G^A) = \sum_{\mu\in \sigma(B)} \sqrt{\mu^2 +4} \ge \max\{\Lambda^{pow}(G^B), m \}$.
    \item[iv)] The graph $G^A$ is homothetically pseudo-invertible.
\end{itemize}
\end{proposition}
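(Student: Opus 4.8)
The plan is to exploit the block structure of the adjacency matrix. Ordering the $m/2$ vertices of $G^B$ first and their attached pendant vertices second (the pendant of vertex $i$ receiving index $m/2+i$), the adjacency matrix of $G^A$ becomes
\[
A = \begin{pmatrix} B & I \\ I & 0 \end{pmatrix},
\]
where $B$ is the adjacency matrix of $G^B$ and $I$ is the $(m/2)\times(m/2)$ identity. The single observation on which all four parts rest is that $A$ is invertible with the explicit integral inverse
\[
A^{-1} = \begin{pmatrix} 0 & I \\ I & -B \end{pmatrix},
\]
which one verifies directly, since the product of the two block matrices is the identity. Because $A^{-1}$ is integral and $\det(A)\det(A^{-1})=1$ with both factors integers, we get $|\det(A)|=1$, so $A$ is integrally invertible; this is precisely the regime in which the Moore-Penrose inverse coincides with $A^{-1}$.

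For part i) I would take the signature $D=\mathrm{diag}(I,-I)$ and compute $D A^{-1} D = \left(\begin{smallmatrix} 0 & -I \\ -I & -B \end{smallmatrix}\right)$, whose entries are nonpositive since $B\ge 0$; hence $G^A$ is negatively integrally invertible. For part ii), assuming $G^B$ bipartite, let $D_0$ be the $\pm1$ signature of the bipartition of $B$, so that every edge of $B$ joins the two colour classes and therefore $D_0 B D_0\le 0$. Taking $D=\mathrm{diag}(D_0,D_0)$ yields $D A^{-1} D = \left(\begin{smallmatrix} 0 & I \\ I & -D_0 B D_0 \end{smallmatrix}\right)\ge 0$; combined with the observation that pendant (degree-one) vertices lie on no cycle, so that $G^A$ has exactly the cycles of $G^B$ and inherits bipartiteness, this proves positive integral invertibility.

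For part iii) I would diagonalise through $B$. Each eigenvector $u$ of $B$ with $Bu=\mu u$ spans, together with its pendant copy, a two-dimensional $A$-invariant subspace on which $A$ acts as $\left(\begin{smallmatrix} \mu & 1 \\ 1 & 0 \end{smallmatrix}\right)$, whose eigenvalues solve $\lambda^2-\mu\lambda-1=0$. Thus each $\mu\in\sigma(B)$ contributes the pair $\lambda_\pm=(\mu\pm\sqrt{\mu^2+4})/2$ with $\lambda_+\lambda_-=-1$ and $\lambda_+-\lambda_-=\sqrt{\mu^2+4}$, so that $|\lambda_+|+|\lambda_-|=\sqrt{\mu^2+4}$. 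Summing over the spectrum gives $\Lambda^{pow}(G^A)=\sum_{\mu\in\sigma(B)}\sqrt{\mu^2+4}$, and the two lower bounds follow termwise from $\sqrt{\mu^2+4}\ge|\mu|$ and $\sqrt{\mu^2+4}\ge 2$ (the latter, summed over the $m/2$ eigenvalues, gives $m$).

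Part iv) is where the block form pays off most cleanly. With $P=\left(\begin{smallmatrix} 0 & I \\ I & 0 \end{smallmatrix}\right)$ the permutation swapping the original and pendant blocks, a direct computation gives $P A P^T = \left(\begin{smallmatrix} 0 & I \\ I & B \end{smallmatrix}\right)$, which is precisely $-D A^{-1} D$ for the signature $D=\mathrm{diag}(I,-I)$ of part i). Hence the adjacency matrix $-DA^{-1}D$ of the inverse graph is a relabelling of $A$ itself, i.e.\ $(G^A)^{-1}\cong G^A$, and matching $D A^{-1} D = -P A P^T$ against Definition~\ref{GAselfsimilar} shows that $G^A$ is homothetically self-pseudo-invertible with $\kappa=-1$. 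I expect the only genuinely delicate point in the whole argument to be the sign bookkeeping in this last step: one must respect the convention that for a negatively signable graph it is $-DA^{\dagger}D$, and not $DA^{\dagger}D$, that serves as the adjacency matrix of the inverse graph, and this is exactly what pins down the value $\kappa=-1$.
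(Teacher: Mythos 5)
Your proof is correct and follows essentially the same route as the paper: the same block form of $A$ and its explicit integral inverse (up to swapping the order of the two blocks), the same signature matrices $\mathrm{diag}(I,-I)$ and $\mathrm{diag}(D_0,D_0)$ for parts i) and ii), the same eigenvalue relation $\lambda=(\mu\pm\sqrt{\mu^2+4})/2$ for part iii), and the same permutation $P=\left(\begin{smallmatrix}0&I\\ I&0\end{smallmatrix}\right)$ for part iv). The only differences are cosmetic: you derive the spectrum via two-dimensional invariant subspaces rather than asserting the relation, and you pin down $\kappa=-1$ explicitly where the paper leaves $\kappa=\pm1$.
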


\begin{proof}
The adjacency matrix $A$ of the graph $G^A$ and its inverse matrix $A^{-1}$ have the following block matrix form:
\begin{equation}
A = \left(
\begin{array}{cc}
0 & I\\
I & B
\end{array}
\right), \qquad\qquad 
A^{-1} = \left(
\begin{array}{cc}
-B & I\\
I & 0
\end{array}
\right), 
\label{blockmatrixGk}
\end{equation}
where $B$ is the adjacency matrix of the graph $G^B$. The matrix $A^{-1}$ is integral. Therefore, $A$ is integrally invertible, $|det(A)|=1$. Furthermore, it is negatively signable by the signature matrix $D=diag(I,-I)$, and statement i) follows. 

To prove ii), suppose that $G^B$ is a bipartite graph. Then there exists a $k\times k$ signature matrix $D_-$ such that $D_-B  D_- \le 0$. In fact, if we set $(D_-)_{ii} =1$ for a vertex $i$ belonging to the first bipartition of $G^B$, and $(D_-)_{jj} = -1$ for a vertex $j$ belonging to the second bipartition, then $D_-B D_- \le 0$. Since $D_- D_- = I$ we have $D A^{-1} D \ge 0$ where $D=diag(D_-, D_-)$ is a diagonal $m\times m$ signature matrix. Hence $G^A$ is also a positively integrally invertible bipartite graph. Statement ii) now follows. 
To prove iii), notice that $\lambda\in\sigma(A)$ iff $\lambda=(\mu\pm \sqrt{\mu^2 +4})/2$ for some $\mu\in\sigma(B)$. Therefore $\lambda\in\sigma_+(A)$ iff $\lambda=(\mu + \sqrt{\mu^2 +4})/2$ for some $\mu\in\sigma(B)$. As $\sum_{\lambda\in \sigma(A)} \lambda =0$, and $\sum_{\mu\in \sigma(B)} \mu =0$ we have
\[
\Lambda^{pow}(A) = \sum_{\lambda\in\sigma(A)} |\lambda| = \sum_{\lambda\in\sigma_+(A)} \lambda  - \sum_{\lambda\in\sigma_-(A)} \lambda = 2 \sum_{\lambda\in\sigma_+(A)}\lambda = \sum_{\mu\in \sigma(B)} \sqrt{\mu^2 +4} \ge \max\{\Lambda^{pow}(G^B), m \},
\]
because $\sqrt{\mu^2 +4} \ge  \max\{|\mu|, 2 \}$. The proof of iii) now follows. Finally, if $D$ is a signature matrix of the inverse graph $(G^A)^{-1}$ then $DA^{-1} D = \kappa P A P^T$ where 
$P= \left(
\begin{array}{cc}
0 & I\\
I & 0
\end{array}\right)$ is a permutation matrix and $\kappa=1$ or $\kappa=-1$. The proof of iv) now follows. 
\end{proof}

In Table~\ref{tab-spektrumall-intinv} we present descriptive statistics of the extreme eigenvalues $\lambda_{max}$, $\lambda_{min}$, spectral gap $\Lambda^{gap}$, spectral index $\Lambda^{ind}$, and spectral power $\Lambda^{pow}$ for all signable integrally invertible simple connected graphs on $m\le 10$ vertices.
Interestingly enough, the maximal and minimal eigenvalues $\lambda_{max}$ and $\lambda_{min}$, maximal gap $\Lambda^{gap}$ and the power $\Lambda^{pow}$ over all signable integrally invertible graphs on $m=6$ vertices are attained by the same graph (see Fig.~\ref{fig-graf-lambdamax-signableinvertible}).

\begin{table}[ht]
\caption{\small Descriptive statistics of the maximal(minimal) eigenvalues $\lambda_{max}$ ($\lambda_{min}$), spectral gap $\Lambda^{gap}$, spectral index $\Lambda^{ind}$, and spectral power $\Lambda^{pow}$ for all signable integrally invertible simple connected graphs on $m\le 10$ vertices.
}
\scriptsize
\begin{center}
\hglue -0.5truecm \begin{tabular}{l||l|l|l|l}
\hline
$m$  & $4$  & $6$  & $8$  & $10$\\
\hline\hline
$E(\lambda_{max})$   &
 1.8941 & 
 2.7716 & 
 3.6912 & 
 4.7082 
\\
$\sigma(\lambda_{max})$ &
 0.3904 & 
 0.5081 & 
 0.5868 & 
 0.6155 
\\
${\mathcal S}(\lambda_{max})$  &
 0 & 
 0.0654 & 
 -0.0866 & 
 -0.1249 
\\
${\mathcal K}(\lambda_{max})$ &
 1 & 
 2.2008 & 
 2.6776 & 
 2.8808 
 \\
$\max(\lambda_{max})$ &
 2.1701 $(+)$& 
 3.7321 $(+)$& 
 5.3628 $(+)$& 
 7.1205 $(+)$
\\
$\min(\lambda_{max})$  &
 1.6180 $(\pm)$& 
 1.8019 $(\pm)$& 
 1.8794 $(\pm)$& 
 1.9190 $(\pm)$
\\
\hline
$E(\lambda_{min})$ &
 -1.5496 & 
 -1.9924 & 
 -2.4068 & 
 -2.7995  
\\
$\sigma(\lambda_{min})$ &
 0.0967 & 
 0.2384 & 
 0.2555 & 
 0.2680 
\\
${\mathcal S}(\lambda_{min})$ &
 0 & 
 -0.1743 & 
 -0.1380 & 
 -0.1912 
\\
${\mathcal K}(\lambda_{min})$ &
 1 & 
 1.8261 & 
 2.8754 & 
 2.9341 
\\
$\max(\lambda_{min})$ &
 -1.4812 $(+)$& 
 -1.6180 $(+)$& 
 -1.6180 $(+)$& 
 -1.6180 $(+)$ 
\\
$\min(\lambda_{min})$&
 -1.6180 $(\pm)$& 
 -2.4142 $(+)$& 
 -3.3028 $(+)$& 
 -4.2361 $(+)$
\\
\hline
$\max(\Lambda^{gap})$ &
 1.3111 $(+)$& 
 1.2679 $(+)$& 
 1.2947 $(+)$& 
 1.4773 $(+)$
\\
$\max(\Lambda^{ind})$ &
 1.0000 $(+)$ & 
 1.0000 $(+)$& 
 1.2501 $(+)$& 
 1.4685 $(+)$
\\
$\max(\Lambda^{pow})$ &
 4.9624 $(+)$& 
 8.8284 $(+)$& 
 13.4838 $(+)$& 
 18.8636 $(+)$
\\
\hline
$\min(\Lambda^{gap})$ &
 1.2360 $(\pm)$& 
 0.7423 $(+)$& 
 0.3877 $(+)$& 
 0.1647 $(+)$
\\
$\min(\Lambda^{ind})$ &
 0.6180 $(\pm)$& 
 0.4142 $(\pm)$& 
 0.2624 $(+)$& 
 0.1092 $(+)$
\\
$\min(\Lambda^{pow})$ &
 4.4720 $(\pm)$& 
 6.8990 $(\pm)$& 
 9.2916 $(\pm)$& 
 11.6568 $(\pm)$
\\
\hline

\end{tabular}
\end{center}
\label{tab-spektrumall-intinv}
\end{table}

\begin{figure}[ht]
    \centering

    \includegraphics[width=.2\textwidth]{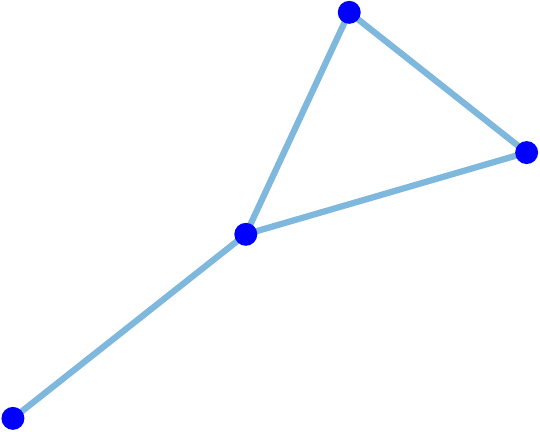}
    \ 
    \includegraphics[width=.2\textwidth]{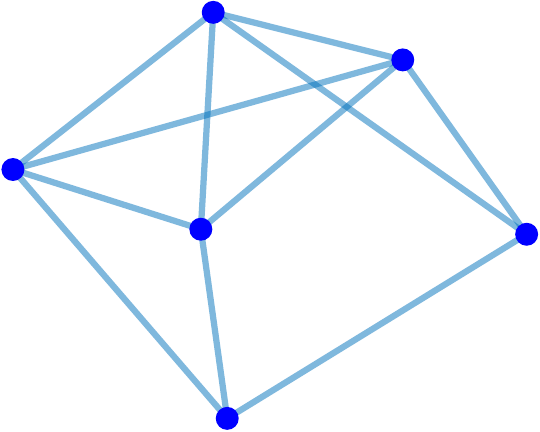}
    \ 
    \includegraphics[width=.23\textwidth]{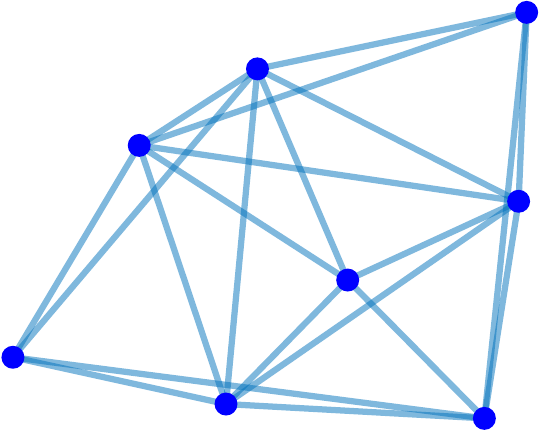}
    \ 
    \includegraphics[width=.3\textwidth]{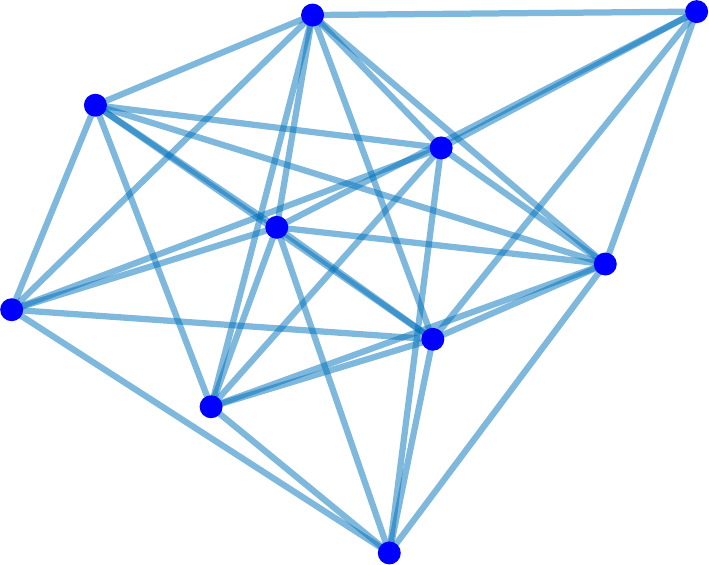}

\vglue 0.4truecm

    \includegraphics[width=.2\textwidth]{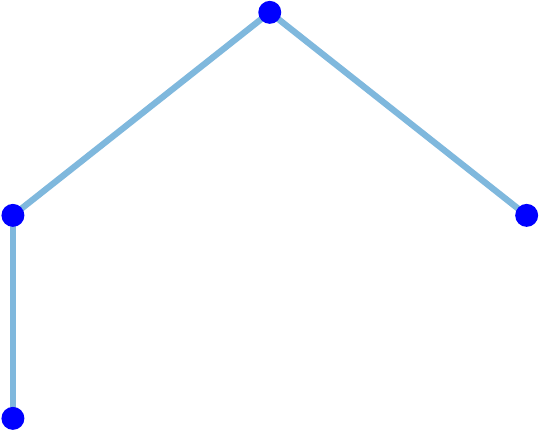}
    \ 
    \includegraphics[width=.2\textwidth]{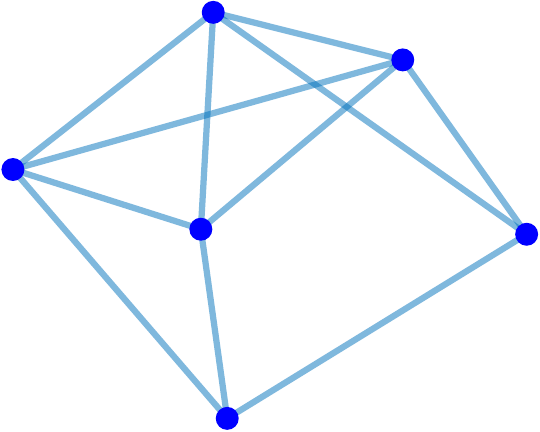}
    \ 
    \includegraphics[width=.23\textwidth]{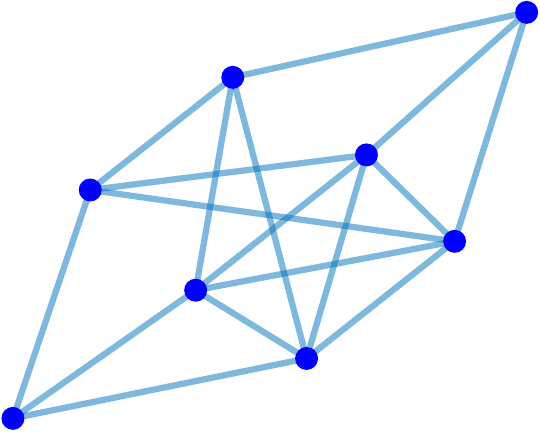}
    \ 
    \includegraphics[width=.3\textwidth]{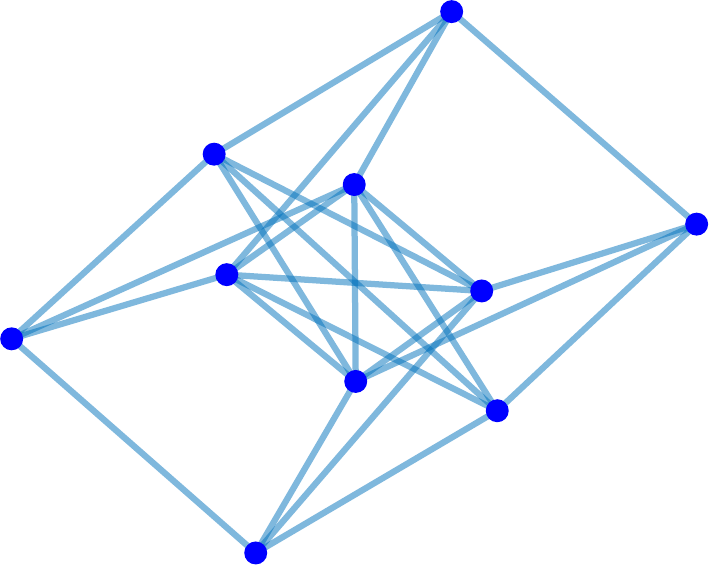}

$m=4$ \hskip 2.4truecm $m=6$ \hskip 2.6truecm $m=8$ \hskip 2.8truecm $m=10$

    \caption{\small Signable integrally invertible graphs on the $m=4,6,8,10$ vertices with a maximal value of $\lambda_{max}$ (top row) and a minimal value of $\lambda_{min}$ (bottom row) (see Table~\ref{tab-spektrumall-intinv}).
    }
    \label{fig-graf-lambdamax-signableinvertible}
\end{figure}

\begin{remark}\label{extremesignability}
In the class of all pseudo-invertible graphs, the maximal and minimal values of spectral indices $\lambda_{max},\lambda_{min}, \Lambda^{gap}, \Lambda^{ind}, \Lambda^{pow}$, can be attained by graphs with varying signability (see Table~\ref{tab-spektrumpositivepseudo}). For example, there are four ($m=5,8,9,10$) negatively but only three ($m=4,6,7$) positively pseudo-invertible graphs with a maximal value of $\lambda_{max}$ for the order $4\le m\le 10$. Similarly, maximal values of $\Lambda^{pow}$ are attained by negative pseudo-invertible graphs for order $m=5,7,9,10$. 
On the other hand, it follows from the results summarized in Table~\ref{tab-spektrumall-intinv} that the extreme values of these spectral indices are achieved only by integrally invertible positive ($+$), or positively/negatively ($\pm$) integrally invertible graphs.

\end{remark}

Finally, we present signable integrally invertible graphs of the order $m=4,6,8,10$ achieving extreme spectral indices. In Fig.~\ref{fig-graf-lambdamax-signableinvertible} we plot signable integrally invertible graphs with a maximal value of $\lambda_{max}$, and minimal value of $\lambda_{min}$. In Fig.~\ref{fig-graf-lambdpoweramax-signableintegrallyinvertible}  we show signable integrally invertible graphs with maximal and minimal values of $\Lambda^{pow}$. The minimal value of $\Lambda^{pow}=m -4 +2 \sqrt{m/2 + 3}$ in the class of all signable integrally invertible graphs is attained by a graph constructed from the star graph $S_{m/2}$  by adding one pendant vertex to each vertex of $S_{m/2}$ (see Fig.~\ref{fig-graf-lambdpoweramax-signableintegrallyinvertible} and Proposition~\ref{prop-ii-pendant} ). In Fig.~\ref{fig-graf-lambdaHLgap-signableintegrallyinvertible} we depict signable integrally invertible graphs with the maximal value of $\Lambda^{gap}$.

\begin{figure}[ht]
    \centering

    \includegraphics[width=.2\textwidth]{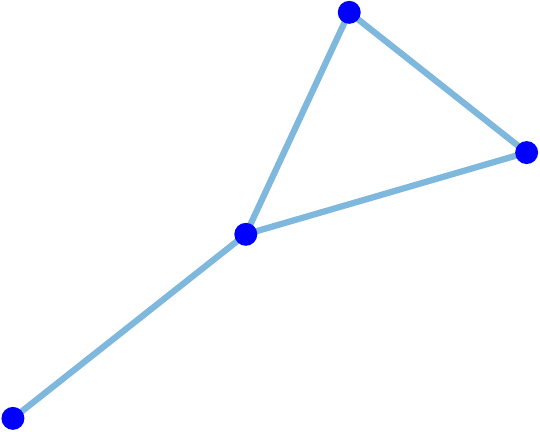 }
    \ 
    \includegraphics[width=.2\textwidth]{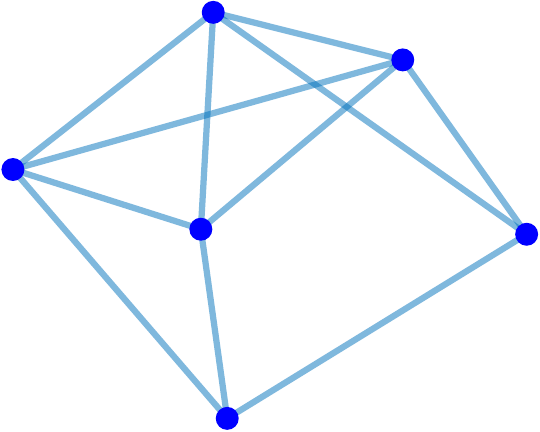 }
    \ 
    \includegraphics[width=.2\textwidth]{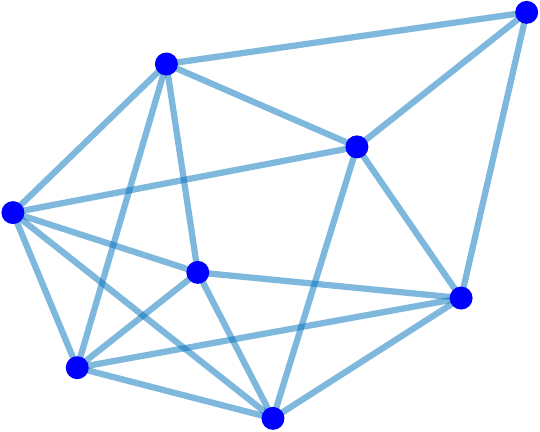 }
    \ 
    \includegraphics[width=.26\textwidth]{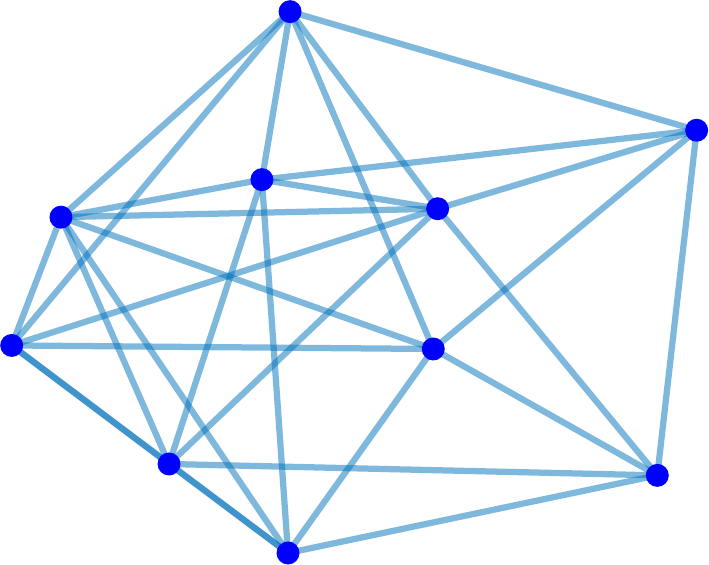}

\bigskip

    \includegraphics[width=.2\textwidth]{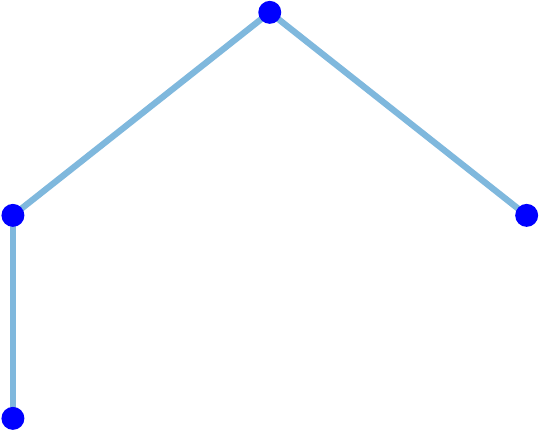 }
    \ 
    \includegraphics[width=.2\textwidth]{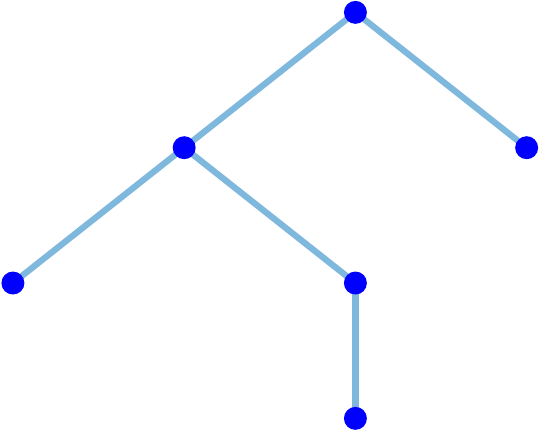 }
    \ 
    \includegraphics[width=.2\textwidth]{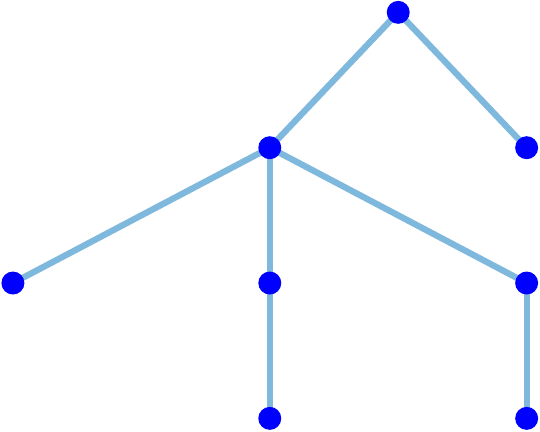 }
    \ 
    \includegraphics[width=.26\textwidth]{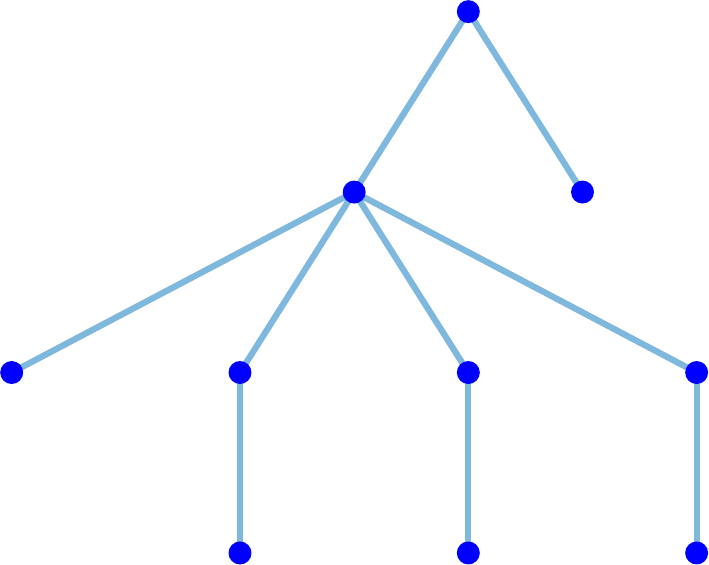}

$m=4$ \hskip 2.4truecm $m=6$ \hskip 2.6truecm $m=8$ \hskip 2.8truecm $m=10$

    \caption{\small Signable integrally invertible graphs on $m=4,6,8,10$ vertices with a maximal (top row) and minimal (bottom row) value of $\Lambda^{pow}$ (see Table~\ref{tab-spektrumall-intinv}).
    }
    \label{fig-graf-lambdpoweramax-signableintegrallyinvertible}
\end{figure}

\begin{figure}[ht]
    \centering

    \includegraphics[width=.2\textwidth]{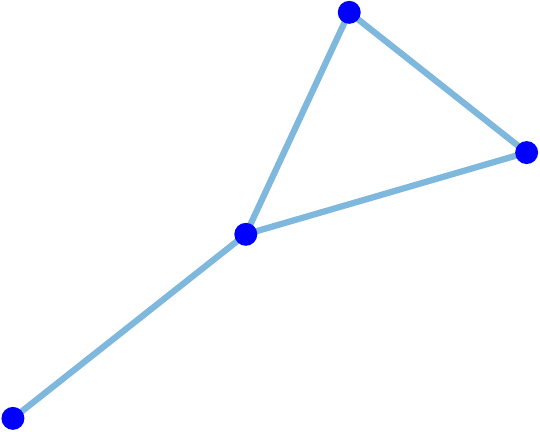 }
    \ 
    \includegraphics[width=.2\textwidth]{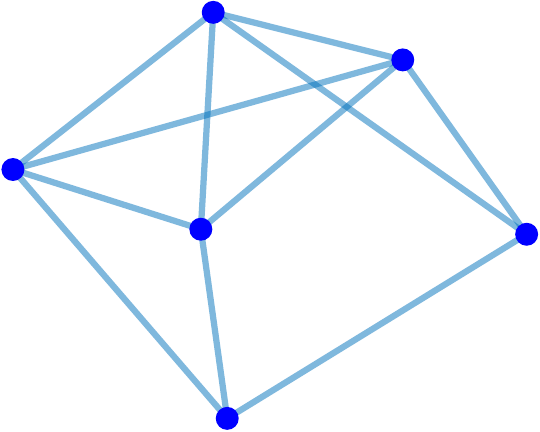 }
    \ 
    \includegraphics[width=.2\textwidth]{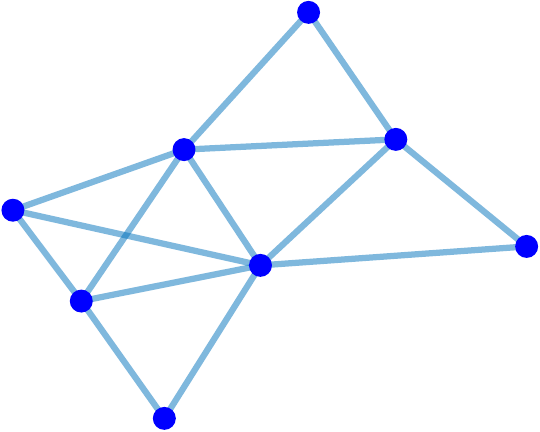 }
    \ 
    \includegraphics[width=.26\textwidth]{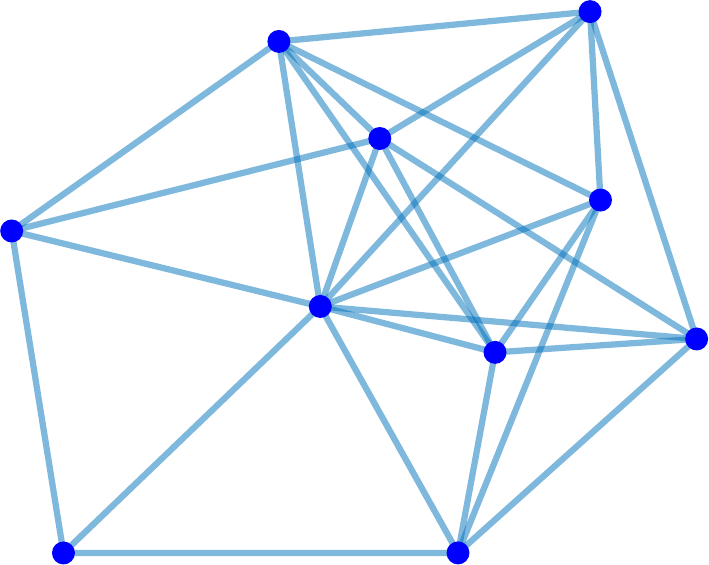}

$m=4$ \hskip 2.4truecm $m=6$ \hskip 2.6truecm $m=8$ \hskip 2.8truecm $m=10$

    \caption{\small Signable integrally invertible graphs on $m=4,6,8,10$ vertices with a maximal value of $\Lambda^{gap}$ (seeTable~\ref{tab-spektrumall-intinv}).
    }
    \label{fig-graf-lambdaHLgap-signableintegrallyinvertible}
\end{figure}

\section{Conclusions}
In this paper, we investigated signable pseudo-invertible graphs. We analyzed various qualitative, statistical, and extreme properties of spectral indices of pseudo-invertible graphs. We derived various upper and lower bounds on maximal and minimal eigenvalues and spectral indices $\Lambda^{gap}, \Lambda^{ind}$, and $\Lambda^{pow}$. We showed that the complete multipartitioned graphs $K_{m_1,\dots,m_k}$ are signable pseudo-invertible only for $k=2$. A novel concept of $G^\mathscr{A}$-complete multipartitioned graph was introduced and analyzed. It is a natural generalization of a complete multipartitioned graph. We also presented computational and statistical results for the class of signable pseudo-invertible and integrally invertible graphs of order $m\le 10$. Some properties of extreme eigenvalues and indices were shown for arbitrary order $m$.

\end{document}